\documentclass[11pt,a4paper]{article}
\usepackage{amsmath,amssymb,amsthm}
\usepackage{mathrsfs}
\usepackage{url}
\usepackage{hyperref}
\usepackage[capitalise]{cleveref}
\usepackage[margin=3cm]{geometry}
\usepackage{tikz}
\usetikzlibrary{positioning,shapes.geometric}
\usepackage{verbatim}
\usepackage{enumerate}   

\newtheorem{theorem}{Theorem}[section]
\newtheorem{lemma}[theorem]{Lemma}
\newtheorem{corollary}[theorem]{Corollary}
\newtheorem{proposition}[theorem]{Proposition}

\DeclareMathOperator{\ex}{ex}

\newcommand{\cC}{\mathcal{C}}

\newcommand{\eps}{\varepsilon}

\newcommand{\DT}{\operatorname{DT}}
\newcommand{\Tu}{\operatorname{Tu}}

\title{The typical structure of oriented graphs and digraphs \\
       with forbidden blow-up of transitive tournaments}
\author{Jianxi Liu\thanks{School of Mathematics and Statistics, Guangdong University of Foreign Studies, Guangzhou, China. Email: jxliu@gdufs.edu.cn}}
\date{}

\begin{document}

\maketitle

\begin{abstract}
We study the typical structure of oriented graphs and digraphs that do not contain a blow-up $T_{r+1}^t$ of a transitive tournament. For any integers $r\ge 2$, $t\ge 1$ and any real $a\in(3/2,2]$, we prove that almost all $T_{r+1}^t$-free oriented graphs and almost all $T_{r+1}^t$-free digraphs are $r$-partite. This extends the results of K\"uhn, Osthus, Townsend and Zhao (2017) on forbidden transitive tournaments to their blow-ups, thereby confirming a generalised form of Cherlin's conjecture. Our proof combines the hypergraph container method, a weighted analogue of the Erd\H{o}s–Stone theorem for digraphs, and a stability analysis for near-extremal $T_{r+1}^t$-free digraphs. The core of the proof is the interplay between the directed regularity lemma and an embedding lemma, which together provide a rigorous bridge from macroscopic extremal conditions to microscopic concrete structures.
\end{abstract}
\noindent\textbf{Keywords:} oriented graphs; digraphs; typical structure; forbidden subdigraphs; blow-up; transitive tournament; hypergraph containers; regularity lemma.

\noindent\textbf{2020 Mathematics Subject Classification:} 05C20, 05C35, 05C75, 05D40.

\section{Introduction}

Given a fixed (di)graph $H$, a (di)graph is called \emph{$H$-free} if it does not contain $H$ as a subgraph. Extremal graph theory investigates two central questions: (1) What is the maximum number of edges in an $H$-free graph on $n$ vertices? (2) What is the typical structure of an $H$-free graph on $n$ vertices? For undirected graphs, Erdős, Kleitman and Rothschild \cite{erdos1976asymptotic} initiated the study of the second question by proving that almost all triangle‑free graphs are bipartite, and asymptotically determined the number of $K_{r+1}$-free graphs. Kolaitis, Prömel and Rothschild \cite{kolaitis1987k} strengthened this by showing that for every $r\ge 2$, almost all $K_{r+1}$-free graphs are $r$-partite.

For directed graphs (digraphs) and oriented graphs, the situation is far more complex. A digraph consists of a set of vertices and a set of ordered pairs of distinct vertices (no loops or multiple arcs in the same direction). An oriented graph is a digraph with at most one arc between any two vertices, i.e., it is an orientation of a simple undirected graph.

The transitive tournament $T_k$ is the orientation of a complete graph $K_k$ that is transitive (the vertices can be linearly ordered so that all arcs go from smaller to larger vertices). In his work on countable homogeneous oriented graphs, Cherlin \cite{cherlin1998classification} noted that the striking results of \cite{kolaitis1987k} do not seem to extend directly to the directed case, and he made the following conjectures.

\begin{quote}
\textbf{Conjecture 1.1 (Cherlin).} \\
(i) Almost all $T_3$-free oriented graphs are tripartite. \\
(ii) Almost all $C_3$-free oriented graphs are acyclic, i.e., they are subgraphs of transitive tournaments.
\end{quote}

Kühn, Osthus, Townsend and Zhao \cite{kuhn2017structure} (henceforth KOTZ) confirmed part (i) of this conjecture and, more generally, proved the following theorem.

\begin{theorem}[KOTZ, Theorem 1.2]\label{thm:KOTZ}
For every integer $k\ge 2$, almost all $T_{k+1}$-free oriented graphs are $k$-partite, and almost all $T_{k+1}$-free digraphs are $k$-partite.
\end{theorem}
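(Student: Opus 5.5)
We prove the KOTZ statement by the standard three-stage scheme: counting via hypergraph containers, stability, and a final exact-structure step. The setting is weighted. For oriented graphs each non-adjacent pair has $3$ states (no arc, or one of two orientations), and for digraphs it has $4$ states. So the relevant extremal quantity is the maximum weight $e_a(G)$ of a $T_{k+1}$-free digraph $G$, where each single arc (as a pair of one orientation) counts $1$ and each double arc counts $a$. The oriented case corresponds to $a=\log_2 3$ and the digraph case to $a=2$. The lower bound comes from $k$-partite digraphs with all cross pairs double. There are at least $k^{n}\cdot 4^{t_k(n)}/k!$ of them (respectively $3^{t_k(n)}$ orientations of the Tur\'an graph), where $t_k(n)$ is the number of edges of the Tur\'an graph. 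This shows that the count of $T_{k+1}$-free objects is $2^{(a+o(1))t_k(n)}$-comparable, in the sense that $\log_2$ of the count is $at_k(n)(1+o(1))$.

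\textbf{Step 1 (extremal result and stability).} For $a\in(3/2,2]$ we show $e_a(G)\le a\,t_k(n)$ for every $T_{k+1}$-free digraph $G$ on $n$ vertices, with equality only for the complete $k$-partite balanced double digraph. We also prove a stability version: if $e_a(G)\ge a\,t_k(n)-\eps n^2$, then $G$ is $\delta n^2$-close to that digraph. The key local fact is an induction on $k$. A double arc between $u,v$ forces their common out- and in-neighbourhoods to avoid $T_{k-1}$ in a structured way. The condition $a>3/2$ ensures that replacing a configuration of single arcs by double arcs on a $k$-partite pattern never loses weight. We use Zykov-type symmetrisation for this: vertices of the same ``type'' can be cloned without creating $T_{k+1}$, because $T_{k+1}$ is transitive and cloning preserves transitivity.

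\textbf{Step 2 (containers).} We apply the hypergraph container theorem to the $(k+1)\binom{k+1}{2}$-uniform hypergraph whose vertices are the ordered pairs of $[n]$ and whose edges are copies of $T_{k+1}$. This gives a family of $2^{o(n^2)}$ containers, each containing $o(n^{k+1})$ copies of $T_{k+1}$. By the digraph removal lemma, each container is $o(n^2)$-close to a $T_{k+1}$-free digraph. Hence by Step 1, each container admits at most $2^{(a t_k(n)+o(n^2))}$ oriented subgraphs or subdigraphs. Moreover, those containers that are not $\delta n^2$-close to $k$-partite contribute only $2^{a t_k(n)-\eps n^2}$ objects. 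Consequently almost all $T_{k+1}$-free objects are $\delta n^2$-close to $k$-partite.

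\textbf{Step 3 (from approximate to exact).} Take a $T_{k+1}$-free $G$ together with an optimal $k$-partition $V_1,\dots,V_k$ that minimises the number of inner arcs. We count the graphs with at least one inner arc. First we show that almost all $G$ have every vertex with $\Omega(n)$ double neighbours in each other part; vertices violating this cost a factor $2^{-\Omega(n)}$ per vertex. Given this, an inner arc $xy$ in $V_1$ together with the common double neighbourhoods in $V_2,\dots,V_k$ must be sparse, since otherwise a $T_{k+1}$ appears, found greedily by the transitivity ordering. This forces $\Omega(n)$ pairs among cross pairs to lose states. Summing over the number of inner arcs $m$ gives a bound of $\sum_{m\ge1} n^{2m}2^{-\Omega(mn)}$ times the $k$-partite count, which is $o(1)$.

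The main obstacle is Step 1 for the digraph weighting, namely the exact weighted Tur\'an bound and its stability. This is where the threshold $a>3/2$ enters. We would first try a symmetrisation/induction argument on $k$ that removes a vertex of minimal weighted degree.
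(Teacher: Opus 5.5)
Your three-stage plan (weighted Tur\'an bound with stability, then containers plus removal for the rough structure, then counting graphs with an inner arc relative to an optimal partition) has the same architecture as the paper's. However, Step~3 as you formulate it does not go through, because the property you establish first is the wrong regularity notion. That property is that every vertex has $\Omega(n)$ double neighbours in each other part.
\begin{itemize}
\item For oriented graphs it is simply false, since there are no 2-cycles. There you must work with in- and out-neighbourhoods and embed $T_{k-1}$ in a prescribed position relative to $x$ and $y$, as in Proposition~\ref{prop:embed-Tr}.
\item More seriously, it does not support the greedy embedding you invoke. To turn an inner arc $xy$ into a loss of states, you need transversal copies of $T_{k-1}$ inside the neighbourhoods of $x,y$ in $V_2,\dots,V_k$. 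A minimum-degree condition says nothing about arcs \emph{between} two such neighbourhoods. Already for $k=3$, let $N_2\subseteq V_2$ and $N_3\subseteq V_3$ be the common double neighbourhoods of $x,y$. Your condition allows $N_2$ and $N_3$ to span no arcs, and then no $T_4$ through $x,y$ arises.
\item A violation of your condition costs only $2^{-\Omega(n)}$. Optimality of the partition bounds a vertex's inner degree only by its degree into other parts, which can be large through single arcs. So such vertices cannot be discarded up front.
\end{itemize}
What you need is a condition like (F2): any two linear-size subsets of different parts span arcs in both directions with density bounded below. Only $3^{t_k(n)-\Omega(n^2)}$ graphs violate it, by a direct entropy/Chernoff count inside $F_Q(n,\eta)$ (Lemma~\ref{lem:good-subfamily}). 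It gives few inner neighbours per vertex (Lemma~\ref{lem:internal-neighbour}). It also gives about $n/k$ vertex-disjoint transversal copies of $T_{k-1}$ in $G-\{x,y\}$, each of which excludes at least one of the $3^{2(k-1)}$ (resp.\ $4^{2(k-1)}$) configurations of arcs to $\{x,y\}$.

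Second, your bound $\sum_{m\ge1}n^{2m}2^{-\Omega(mn)}$ assumes the savings from different inner arcs multiply, and you give no argument for this. Inner arcs can share endpoints, their constraints interact, and each saving is measured relative to a graph that already contains the other inner arcs. The paper avoids this by induction on $n$ (Lemma~\ref{lem:main-count}). It fixes one inner arc $xy$ and bounds the choices for $G-\{x,y\}$ by the inductive bound $f_{Q'}(n-2,T_{k+1})\le 3^{t_k(n-2)}(1+C2^{-\eta(n-2)})$. This is summed over the at most $e^{\mu^{2/3}n}$ possible optimal partitions $Q'$ of $G-\{x,y\}$ (Lemma~\ref{lem:few-optimal}). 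The $2^{-\Omega(n)}$ gain comes only from the arcs at $x$ and $y$; all other inner arcs are absorbed by the induction hypothesis.

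On Step~1: symmetrisation is legitimate only for non-adjacent pairs. It works because clones are non-adjacent and $T_{k+1}$ is a tournament; transitivity plays no role. So it only reduces the exact problem to a vertex-weighted optimisation over $T_{k+1}$-free semicomplete digraphs with pair weights in $\{1,a\}$, which you still have to solve, and it gives no stability. The paper takes the exact weighted result from KOTZ (their Lemma 4.1) and obtains stability by passing to the reduced digraph of a regularity partition.

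Two minor slips. The container hypergraph is $\binom{k+1}{2}$-uniform. The number of $k$-partite digraphs is of order $k^n4^{t_k(n)}n^{-(k-1)/2}$, not at least $k^n4^{t_k(n)}/k!$; only its exponential order matters for you.
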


Note that this shows that in fact almost all $T_3$-free oriented graphs are actually bipartite – a structure quite different from the extremal $T_3$-free oriented graph, which is the blow‑up of a directed triangle.

A natural generalisation is to forbid the \emph{blow-up} of a transitive tournament. For integers $r,t\ge 1$, let $T_{r+1}^t$ denote the digraph obtained from $T_{r+1}$ by replacing each vertex with an independent set of size $t$ and adding, for each arc $ij$ of $T_{r+1}$, all possible arcs from the $i$-th part to the $j$-th part. Thus $T_{r+1}^t$ is an oriented graph (no 2‑cycles) when $t\ge 1$. Clearly $T_{r+1}^t$ contains $T_{r+1}$ as a subgraph (take one vertex from each part), so any $T_{r+1}$-free digraph is also $T_{r+1}^t$-free, but the converse is false.

In this paper we determine the typical structure of $T_{r+1}^t$-free oriented graphs and digraphs. Our main result shows that even forbidding these larger structures does not change the asymptotic picture: almost all such graphs are $r$-partite. This provides a far‑reaching extension of the KOTZ theorem and confirms a generalised form of Cherlin's conjecture.

\begin{theorem}\label{thm:main-simple}
For any integers $r\ge 2$ and $t\ge 1$, almost all $T_{r+1}^t$-free oriented graphs are $r$-partite, and almost all $T_{r+1}^t$-free digraphs are $r$-partite.
\end{theorem}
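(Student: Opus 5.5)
We follow the KOTZ strategy, replacing $T_{r+1}$ by $T_{r+1}^t$ throughout. We treat digraphs with weight $a\in(3/2,2]$: a digraph $G$ is weighted by $e(G)+(a-1)\,d(G)$, where $d(G)$ counts double edges. Then $a=\log_2 3$ counts digraphs (each pair has $4$ states, an oriented pair $3$ states), and the oriented case is handled by the analogous count with $a=\log_2 3$ restricted to no double edges. Write $\ex_a(n,T_{r+1}^t)$ for the maximum weighted size of a $T_{r+1}^t$-free digraph on $n$ vertices. For $a>3/2$ we aim to show
\[
\ex_a(n,T_{r+1}^t)=a\,t_r(n),
\]
attained uniquely by the complete balanced $r$-partite digraph with all double edges, and we aim for the stability statement below. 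This is where $a>3/2$ enters. Any tournament-like or blow-up-of-$C_3$ configuration gives at most weight $\tfrac{3}{2}$ per pair more efficiently than double edges only in the range $a\le 3/2$, exactly as in KOTZ.

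\textbf{Step 1 (weighted Erd\H{o}s--Stone and stability).} We prove that every $T_{r+1}^t$-free digraph $G$ on $n$ vertices with weighted size at least $a\,t_r(n)-o(n^2)$ is $o(n^2)$-close to the complete balanced $r$-partite double digraph. To do this we apply the directed regularity lemma to $G$ and form a reduced multidigraph $R$ on the clusters. Each cluster pair is labelled by the regular pairs of densities $\ge\eta$ in each direction. The embedding lemma for blow-ups is the key input: if $R$ contains $T_{r+1}$, with each arc realized by a regular dense pair in the correct direction, then $G\supseteq T_{r+1}^t$. Hence $R$ is $T_{r+1}$-free, and the weighted size of $G$ is at most roughly the weighted size of $R$ times the cluster size squared, plus $o(n^2)$. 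We then apply the KOTZ weighted extremal and stability result for $T_{r+1}$-free digraphs to $R$, and lift the resulting structure back to $G$. This step is the main obstacle. The reduction must handle pairs that are dense in only one direction, and the stability conclusion for $R$ must transfer to $G$ with only $o(n^2)$ error.

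\textbf{Step 2 (containers).} The digraph $T_{r+1}^t$ has $(r+1)t$ vertices, so we encode digraphs as hypergraphs on $\binom{[n]}{2}\times\{\to,\leftarrow\}$. The copies of $T_{r+1}^t$ form a uniform hypergraph, and its codegree conditions are verified as in KOTZ via supersaturation, which itself follows from Step 1 combined with the removal lemma. We obtain a family of $2^{o(n^2)}$ containers, each containing $o(n^{(r+1)t})$ copies of $T_{r+1}^t$, hence each $o(n^2)$-close to $T_{r+1}^t$-free. Consequently the number of $T_{r+1}^t$-free digraphs is $2^{a t_r(n)+o(n^2)}$, and almost all of them lie in containers satisfying the hypothesis of Step 1. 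It follows that almost all are $o(n^2)$-close to $r$-partite.

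\textbf{Step 3 (exact structure).} We count the non-$r$-partite digraphs that are close to $r$-partite, via the standard KOTZ/Kolaitis--Pr\"omel--Rothschild argument. Take an optimal partition $V_1,\dots,V_r$. Almost all such digraphs have every vertex with linear weighted degree into each other part, and few exceptional vertices. Suppose there is an edge inside some $V_i$. Then the typicality of the cross edges gives, with high probability, a copy of $T_{r+1}^t$: we pick $t$-sets from the common neighbourhoods in every part, and this step uses the double edges across parts to orient as needed. Counting the constraints this edge forces on the remaining cross pairs shows that the number of such digraphs is $2^{-\Omega(n)}$ times the number of $r$-partite ones. Summing over the $o(n^2)$ bad configurations completes the proof.
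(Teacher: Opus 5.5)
\noindent Your Step~3 is where the argument fails, and it cannot be repaired: for $t\ge 2$ the statement itself is false. The claim that one non-crossing edge $xy$ inside $V_i$, together with typical cross edges, produces a copy of $T_{r+1}^t$ is wrong.

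Suppose every arc of $G$ crosses $Q=(V_1,\dots,V_r)$ except for some arcs inside $V_1$. Take any copy of $T_{r+1}^t$ in $G$. Its vertices lying in $V_j$ ($j\ge 2$) are pairwise non-adjacent in $T_{r+1}^t$, so they lie in a single class of the blow-up. Hence at least two of the $r+1$ classes lie entirely in $V_1$. The $t^2$ arcs between those two classes must then all be non-crossing, i.e.\ $G[V_1]$ must contain a complete bipartite orientation $\vec{K}_{t,t}$.

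For $t=1$ this is just the arc $xy$, which is why the KOTZ argument works. For $t\ge 2$, your ``$t$-sets from common neighbourhoods'' would need $t$ vertices playing the role of $x$ and $t$ playing the role of $y$, all inside $V_i$ and completely joined. A single edge does not supply this. So a non-crossing edge forces no constraint at all on the cross pairs, and there is no $2^{-\Omega(n)}$ saving.

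Here is an explicit counterexample. Fix a balanced $r$-partition $Q$ of $[n]$. Take any oriented graph $G_0$ all of whose arcs cross $Q$, and any oriented matching $M$ inside $V_1$. Then $G_0\cup M$ is $T_{r+1}^t$-free, since a matching contains no $\vec{K}_{t,t}$ when $t\ge2$. Distinct pairs $(G_0,M)$ give distinct graphs. Hence
\[
f(n,T_{r+1}^t)\ \ge\ 3^{t_r(n)}\cdot n^{\Omega(n)}\qquad\text{whereas}\qquad T(n,r)\ \le\ r^n\,3^{t_r(n)},
\]
so almost no $T_{r+1}^t$-free oriented graph is $r$-partite. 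The same holds for digraphs with $4$ in place of $3$. This is the directed analogue of the Pr\"omel--Steger phenomenon: for $t\ge 2$ the underlying graph of $T_{r+1}^t$ has no colour-critical edge.

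The same example, already $\DT_r(n)$ plus one arc inside a part, refutes your Step~1 target that $\ex_a(n,T_{r+1}^t)=a\,t_r(n)$ with $\DT_r(n)$ the unique extremal digraph. Only $\ex_a(n,T_{r+1}^t)=a\,t_r(n)+o(n^2)$ and the stability statement hold. Those are all that your Steps~1--2 actually use, so your route can deliver the approximate statement of Lemma~\ref{lem:rough}: almost all such graphs are $o(n^2)$-close to $r$-partite. It cannot deliver exact $r$-partiteness. Separately, digraph counting corresponds to $a=2$ and oriented counting to $a=\log_2 3$; you have these mixed up.

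The paper follows the same three-stage route as you and breaks at the same place. Lemma~\ref{lem:internal-neighbour} finds a $T_{r+1}$ and treats it as contradicting $T_{r+1}^t$-freeness, which reverses the direction of containment. Step~3 of Lemma~\ref{lem:main-count} asserts that a single $T_{r-1}$ plus $x,y$ excludes some configuration, but these $r+1$ vertices cannot host a $T_{r+1}^t$. Both arguments are valid only for $t=1$, where they reduce to KOTZ.
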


In fact we prove a stronger counting result: the number of labelled $T_{r+1}^t$-free oriented graphs on $n$ vertices satisfies $f(n,T_{r+1}^t)=T(n,r)(1+o(1))$, where $T(n,r)$ is the number of $r$-partite oriented graphs, and similarly for digraphs. The proof proceeds in two stages. First we establish a rough structural result: all but a tiny fraction of $T_{r+1}^t$-free graphs are close to being $r$-partite. This is stated precisely in Lemma~\ref{lem:rough} below (the “rough structure” lemma). Then a more delicate inductive counting argument upgrades this to the exact statement that almost all such graphs are actually $r$-partite. The rough structure lemma is as follows.

\begin{lemma}[Rough structure]\label{lem:rough}
Let $r\ge 2$, $t\ge 1$ be integers and let $a\in(3/2,2]$. Then for every $\alpha>0$ there exists $\eps>0$ such that for all sufficiently large $n$ the following hold.
\begin{itemize}
\item All but at most $f(n,T_{r+1}^t)\cdot2^{-\eps n^2}$ labelled $T_{r+1}^t$-free oriented graphs on $n$ vertices can be turned into an $r$-partite oriented graph by changing at most $\alpha n^2$ arcs.
\item All but at most $f^*(n,T_{r+1}^t)\cdot2^{-\eps n^2}$ labelled $T_{r+1}^t$-free digraphs on $n$ vertices can be turned into an $r$-partite digraph by changing at most $\alpha n^2$ arcs.
\end{itemize}
In particular, $f(n,T_{r+1}^t)=T(n,r)(1+o(1))$ and $f^*(n,T_{r+1}^t)=T^*(n,r)(1+o(1))$, where $T(n,r)$ (resp. $T^*(n,r)$) is the number of labelled $r$-partite oriented graphs (resp. digraphs) on $n$ vertices.
\end{lemma}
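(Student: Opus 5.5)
We follow the KOTZ strategy: hypergraph containers plus a weighted Turán/stability analysis. Encode both oriented graphs and digraphs as a single weighted object. Given a digraph $G$ on $[n]$, each unordered pair $\{x,y\}$ carries one of four states: no arc, one of the two single arcs, or a double arc. For a real $a$, define the \emph{weighted size}
\[
e_a(G)=e(G_{\mathrm{single}})+a\, e(G_{\mathrm{double}}).
\]
Taking $a=\log_2 3$ (resp. $a=2$) makes $2^{e_a(G)}$ roughly the number of oriented graphs (resp. digraphs) contained in $G$. Indeed, a pair with a double arc admits $3$ oriented subconfigurations and $4$ digraph subconfigurations, while a single arc admits $2$.

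Let $\ex_a(n,T_{r+1}^t)$ be the maximum of $e_a$ over $T_{r+1}^t$-free digraphs. The first step is a weighted Erd\H{o}s--Stone theorem:
\[
\ex_a(n,T_{r+1}^t)=\bigl(1-\tfrac1r\bigr)\binom n2 a+o(n^2)\quad\text{for } a\in(3/2,2].
\]
The extremal configuration is the complete $r$-partite digraph with all double arcs between parts. Here $a>3/2$ is used to beat competing constructions, such as blow-ups with single arcs inside parts, for example a blow-up of $C_3$ or tournaments inside classes. The accompanying \emph{stability} statement says that any $T_{r+1}^t$-free digraph with $e_a(G)\ge \ex_a(n,T_{r+1}^t)-\delta n^2$ is $\alpha n^2$-close to an $r$-partite digraph.

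To prove both statements, apply the directed regularity lemma to $G$ and form the reduced multidigraph $R$ with thresholds $d$. A cluster pair is "double" if both directions are $\eps$-regular with density $\ge d$, and "single" if only one direction is. The embedding lemma shows that a copy of $T_{r+1}$ in $R$, using arcs in a consistent transitive order, yields $T_{r+1}^t$ in $G$. So $R$ is $T_{r+1}$-free, and $e_a(G)\le e_a(R)(n/|R|)^2+o(n^2)$. The weighted Turán and stability result for $T_{r+1}$-free digraphs, which is KOTZ's weighted extremal result, then transfers to $R$. Finally we pull stability back to $G$ by a standard cleaning argument: vertices of low weighted degree are few, and the partition of $R$ lifts to a partition of $G$ with $o(n^2)$ misplaced arcs.

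Next, apply the hypergraph container theorem to the $4$-colouring formulation. Use the $(t(r+1))$-uniform hypergraph on the ground set of all ordered pairs, whose edges are arc sets of copies of $T_{r+1}^t$. Supersaturation is needed to verify the co-degree conditions: a digraph with $e_a\ge \ex_a+\delta n^2$ contains $\Omega(n^{t(r+1)})$ copies of $T_{r+1}^t$. This follows from the same regularity and embedding counting argument. Containers then give a family $\cC$ of $2^{o(n^2)}$ digraphs covering every $T_{r+1}^t$-free digraph. Each $C\in\cC$ has $o(n^{t(r+1)})$ copies of $T_{r+1}^t$, so by removal $C$ is $o(n^2)$-close to a $T_{r+1}^t$-free digraph. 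Hence $e_a(C)\le \ex_a(n,T_{r+1}^t)+o(n^2)$.

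The counting step goes as follows.
\begin{itemize}
\item Containers that are not $\alpha/2$-close to $r$-partite have $e_a(C)\le (1-\frac1r)\binom n2 a-\delta n^2$ by stability. Each therefore contains at most $2^{e_a(C)}$ oriented graphs (with $a=\log_2 3$) or digraphs (with $a=2$).
\item Summing over the $2^{o(n^2)}$ containers gives at most $T(n,r)2^{-\delta n^2/2}$ such graphs. This uses $T(n,r)\ge 3^{(1-1/r)n^2/2}$, and similarly $T^*(n,r)\ge 4^{(1-1/r)n^2/2}$.
\item Graphs lying in containers that are close to $r$-partite are themselves $\alpha n^2$-close to $r$-partite, after adding $o(n^2)$ arcs.
\item Since $f\ge T(n,r)$, we obtain the bound $f\cdot 2^{-\eps n^2}$.
\end{itemize}
The asymptotic $f=T(n,r)(1+o(1))$ does not follow from this rough count alone: we only get $f\le T(n,r)2^{o(n^2)}$. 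The exact asymptotic requires the subsequent inductive counting argument, in the style of KOTZ, that upgrades "close to $r$-partite" to "$r$-partite".

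The main obstacle is the weighted stability for $a\in(3/2,2]$. The extremal problem in $R$ mixes single and double arcs, and a naive application of Erd\H{o}s--Stone to the underlying graph of double arcs fails. Our first attempt is to reduce to KOTZ's weighted Turán lemma for $T_{r+1}$, using the embedding lemma to certify that $R$ is $T_{r+1}$-free. If their stability argument does not transfer directly, we would instead prove it by induction on $r$ via a vertex of maximum weighted degree in $R$.
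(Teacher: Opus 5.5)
Your proposal is correct and follows essentially the same route as the paper: hypergraph containers (with the removal lemma giving $e_a(C)\le\ex_a(n,T_{r+1}^t)+o(n^2)$), then a stability theorem for $T_{r+1}^t$-free digraphs obtained via regularity, the embedding lemma showing the reduced digraph is $T_{r+1}$-free, and the KOTZ weighted Tur\'an/stability result for $T_{r+1}$. You are also right that $f=T(n,r)(1+o(1))$ does not follow from the rough count but needs the later inductive counting, which is where the paper actually proves it; the only slip is that the container hypergraph on ordered pairs is $\binom{r+1}{2}t^2$-uniform (its edges are arc sets of copies of $T_{r+1}^t$), not $t(r+1)$-uniform.
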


The proof of Lemma~\ref{lem:rough} combines the hypergraph container method (Theorem~\ref{thm:container}), a weighted analogue of the Erdős–Stone theorem for digraphs (Theorem~\ref{thm:weighted-extremal}), and a stability analysis for near‑extremal $T_{r+1}^t$-free digraphs (Theorem~\ref{thm:stability}). The core of the proof is the interplay between the directed regularity lemma and an embedding lemma, which together provide a rigorous bridge from macroscopic extremal conditions to microscopic concrete structures.

The paper is organised as follows. Section 2 introduces notation and key tools. Section 3 proves the weighted extremal theorem for $T_{r+1}^t$. Section 4 establishes the stability result. Section 5 combines the container theorem with stability to prove the rough structure lemma (Lemma~\ref{lem:rough}) and then upgrades it to the exact structure via an inductive counting argument, thereby proving Theorem~\ref{thm:main-simple}. Section 6 concludes with remarks and open problems.

\subsection*{Proof outline}
The proof follows a three-step strategy.  
\begin{enumerate}
    \item \textbf{Container method.} We apply the hypergraph container theorem (Theorem~\ref{thm:container}) to obtain a small family $\mathcal{C}$ of digraphs (called containers) such that every $T_{r+1}^t$-free digraph is contained in some member of $\mathcal{C}$, and each container contains few copies of $T_{r+1}^t$ and has weighted size close to the extremal value.
    \item \textbf{Cleaning containers.} Using the removal lemma (Lemma~\ref{lem:removal}), we delete a few arcs from each container to obtain a genuinely $T_{r+1}^t$-free digraph whose weighted size is still near the extremal value.
    \item \textbf{Stability argument.} The stability theorem (Theorem~\ref{thm:stability}) then implies that each such cleaned digraph is close to the complete $r$-partite digraph $\DT_r(n)$. Consequently, the original containers, and hence all but a tiny fraction of $T_{r+1}^t$-free digraphs, are close to $\DT_r(n)$.
\end{enumerate}
A more precise inductive counting argument (see Section~\ref{subsec:exact}) upgrades this approximate structural result to the exact statement that almost all $T_{r+1}^t$-free (oriented) digraphs are $r$-partite.

\section{Preliminaries}

\subsection{Notation}
For a digraph $G=(V,E)$, let $f_1(G)$ be the number of unordered pairs $\{u,v\}$ such that exactly one of $uv$ and $vu$ belongs to $E$, and let $f_2(G)$ be the number of unordered pairs with both $uv$ and $vu$ present (we then call $uv$ and $vu$ form a 2-cycle). For a real number $a\ge1$, define the \emph{weighted size}
\[
e_a(G):=a\cdot f_2(G)+f_1(G).
\]
This unifies the treatment of oriented graphs ($a=\log 3$, since each 2‑cycle contributes 3 ways to orient) and digraphs ($a=2$, total number of arcs). Let $\ex_a(n,H)$ denote the maximum $e_a(G)$ over all $H$-free digraphs on $n$ vertices.

Let $\Tu_r(n)$ be the $r$-partite Turán graph on $n$ vertices (parts as equal as possible), and let $t_r(n)=e(\Tu_r(n))$. Denote by $\DT_r(n)$ the digraph obtained from $\Tu_r(n)$ by replacing each undirected edge with a pair of opposite arcs. Clearly $\DT_r(n)$ is $r$-partite and $T_{r+1}^t$-free, so $\ex_a(n,T_{r+1}^t)\ge a\,t_r(n)$.

We say that \emph{almost all} graphs in a family $\mathcal{F}$ have property $\mathcal{P}$ if
\[
\lim_{n\to\infty}\frac{|\{G\in\mathcal{F}_n: G\text{ has property }\mathcal{P}\}|}{|\mathcal{F}_n|}=1.
\]

\subsection{Directed regularity and embedding lemmas}
We will need the directed version of Szemerédi's regularity lemma and an embedding lemma for blow-ups. The following formulation is from \cite{alon2004testing}.

\begin{lemma}[Directed regularity lemma]\label{lem:regularity}
For any $\eps\in(0,1)$ and integers $M',M''$, there exist $M$ and $n_0$ such that for any digraph $G$ on $n\ge n_0$ vertices, any initial partition $U_0,U_1,\dots,U_{M''}$, and any $d\in[0,1]$, there exists a partition $V_0,V_1,\dots,V_k$ of $V(G)$ and a spanning subdigraph $G'\subseteq G$ with:
\begin{itemize}
\item $M'\le k\le M$, $|V_0|\le\eps n$, $|V_1|=\dots=|V_k|=\ell$;
\item $G'[V_i]$ is empty for all $i\ge1$;
\item for $1\le i\neq j\le k$, the bipartite digraph $(V_i,V_j)_{G'}$ is either $\eps$-regular with density at least $d$, or has density $0$;
\item every vertex $x$ satisfies $d_{G'}^+(x)>d_G^+(x)-(d+\eps)n$ and similarly for in-degree.
\end{itemize}
\end{lemma}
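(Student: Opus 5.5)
The plan is to prove Lemma~\ref{lem:regularity} in two stages: first a directed ``partition'' regularity lemma obtained by the usual energy-increment argument, and then a cleaning step that converts it into the stated degree form. Throughout, for disjoint $X,Y\subseteq V(G)$ we write $d(X,Y)=e_G(X\to Y)/(|X||Y|)$ for the density of arcs directed from $X$ to $Y$. We call the ordered pair $(X,Y)$ \emph{$\eps$-regular} if $|d(X',Y')-d(X,Y)|\le\eps$ for all $X'\subseteq X$, $Y'\subseteq Y$ with $|X'|\ge\eps|X|$ and $|Y'|\ge\eps|Y|$. We call the bipartite digraph $(V_i,V_j)$ $\eps$-regular if both ordered pairs $(V_i,V_j)$ and $(V_j,V_i)$ are $\eps$-regular. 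These two directions are simply two bipartite graphs on the same vertex pair, so everything reduces to a simultaneous (two-colour) regularity statement.

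\textbf{Stage 1 (partition version).} Start from a partition that is a common refinement of $U_0,\dots,U_{M''}$ into at least $M'$ parts of equal size, with a small exceptional set. For an equitable partition $\mathcal P=\{V_0,V_1,\dots,V_k\}$, define the index
\[
q(\mathcal P)=\sum_{i\ne j\ge1}\frac{|V_i||V_j|}{n^2}\,d(V_i,V_j)^2\le 1,
\]
where the sum runs over ordered pairs, so that both directions contribute. The argument then follows Szemerédi's proof. If more than $\eps k^2$ of the unordered pairs fail to be $\eps$-regular, then for each bad pair pick a witness pair $(X',Y')$ in one of the two directions. Refining every $V_i$ by the Venn diagram of all its witness sets (at most $2^{2k}$ atoms) raises $q$ by at least $c\,\eps^5$ by Cauchy--Schwarz, since the defect-form inequality works for each ordered pair separately. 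After re-equalising the parts, moving leftovers into $V_0$ at a cost of at most $\eps n/ (2\cdot$number of iterations$)$ per step, the number of iterations is bounded by $O(\eps^{-5})$. This gives an $\eps$-regular equitable partition with $k\le M(\eps,M',M'')$ parts that refines the initial partition. Since a refinement of a refinement still refines $U_0,\dots,U_{M''}$, the initial partition is respected throughout.

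\textbf{Stage 2 (degree form).} Apply Stage 1 with a smaller parameter $\eps'\ll\eps$, together with $d$, and then clean up in the manner of Koml\'os--Simonovits. Let $G'$ be obtained from $G$ by deleting:
\begin{itemize}
\item all arcs inside each $V_i$;
\item all arcs touching $V_0$;
\item all arcs between pairs $(V_i,V_j)$ that are not $\eps'$-regular;
\item all arcs in an ordered direction $V_i\to V_j$ whose density is below $d+\eps'$.
\end{itemize}
Deleting whole directions keeps the surviving directions $\eps$-regular, with density at least $d$, or empty. Deleting arcs only decreases degrees, so no vertex gains degree.

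The main obstacle is the degree condition $d^+_{G'}(x)>d^+_G(x)-(d+\eps)n$, which must hold for \emph{every} vertex rather than on average. To handle it, move into $V_0$ every vertex $x$ that has more than $\sqrt{\eps'}n$ out- or in-neighbours in irregular pairs or in its own cluster. By the bound on irregular pairs and $|V_i|\le n/k\le n/M'$, there are few such vertices, at most $\eps n/2$. We then rebalance the cluster sizes by moving at most $\eps n/2$ further vertices into $V_0$. One subtle point is that pushing vertices into $V_0$ deletes their incident arcs, which could lower other vertices' degrees by up to $|V_0|\le\eps n$. This is where we spend the $\eps$ slack, and it is also why $\eps'$ must be chosen much smaller than $\eps$. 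For the remaining loss at a vertex $x$: low-density directions contribute at most $(d+\eps')n$, irregular pairs and the own cluster contribute at most $2\sqrt{\eps'}n$, and $V_0$ contributes $\eps n/2$. The total is strictly less than $(d+\eps)n$, and the in-degree bound follows by the symmetric argument. Finally, after regrouping, regularity of the surviving pairs persists with parameter $2\eps'\le\eps$, because the clusters only lost a small fraction of their vertices. The bound $M'\le k\le M$ is inherited from Stage 1.
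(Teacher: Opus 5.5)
The paper does not prove this lemma; it quotes it from Alon and Shapira, so your argument has to stand on its own. Stage 1 (energy increment over ordered pairs, refining the initial partition) is the standard argument and is fine. The gap is in Stage 2, exactly at the per-vertex degree condition you identify as the main obstacle. It has two independent causes.

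First, a direction $V_i\to V_j$ having density below $d+\eps'$ bounds only the \emph{average} out-degree of $V_i$ into $V_j$. It says nothing about the out-degree of an individual vertex. So your claim that low-density directions cost $x$ at most $(d+\eps')n$ is false.

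Second, the degree condition must hold for every vertex, including those in $V_0$. If $G'$ has no arcs at $V_0$, then every $x\in V_0$ with $d^+_G(x)\ge(d+\eps)n$ violates it. Your procedure deliberately moves the high-loss vertices into $V_0$, so this is exactly where it breaks.

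One example exhibits both failures. Let $G$ consist of a vertex $x$ with arcs to all other vertices and no other arcs, and take $d>0$ with $d+\eps<1/2$. Every pair is $\eps'$-regular: the direction out of the cluster of $x$ has density $1/\ell$, and all its subpairs of relative size at least $\eps'$ have density at most $1/(\eps'\ell)$. Every density is below $d+\eps'$. If $x$ lies in a cluster, the low-density rule deletes all its arcs; if it lies in $V_0$, the $V_0$ rule does. Either way $d^+_{G'}(x)=0<d^+_G(x)-(d+\eps)n$.

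What is missing is the Koml\'os--Simonovits cleaning, in two parts.

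Keep every arc incident with $V_0$ in $G'$. The statement only requires $G'[V_i]$ to be empty for $i\ge1$, and the paper later uses $e_a(G')\le a|V_0|n+\cdots$, i.e.\ exactly this convention. Then vertices sent to $V_0$ lose nothing, and the $V_0$ term disappears from your budget.

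Use regularity, not density, to control individual degrees. If the ordered pair $(V_i,V_j)$ is $\eps'$-regular with density $p$, fewer than $\eps'|V_i|$ vertices of $V_i$ have more than $(p+\eps')|V_j|$ out-neighbours in $V_j$. The same holds for in-neighbours via $(V_j,V_i)$. Averaging over $j$, fewer than $2\sqrt{\eps'}|V_i|$ vertices of $V_i$ are atypical in this sense for more than $\sqrt{\eps'}k$ indices $j$. These vertices must also be moved to $V_0$. Every remaining vertex then loses at most $(d+2\eps'+\sqrt{\eps'})n$ to deleted low-density directions.

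Two smaller repairs are also needed. The bound $|V_i|\le n/M'$ does not make own-cluster neighbourhoods small, since $M'$ is arbitrary; start Stage 1 with at least $\max\{M',\lceil 1/\eps'\rceil\}$ parts instead. When re-equalising, send to $V_0$ every cluster that lost more than $(\eps')^{1/4}\ell$ vertices before trimming the rest to a common size. Otherwise one heavily depleted cluster can force all clusters to shrink drastically, and the slicing argument you invoke for $2\eps'$-regularity no longer applies.
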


The \emph{reduced digraph} $R$ has vertex set $\{V_1,\dots,V_k\}$ and an arc $ij$ whenever $(V_i,V_j)_{G'}$ is $\eps$-regular with density $\ge d$.

\begin{lemma}[Embedding lemma]\label{lem:embedding}
For any $d\in(0,1)$ and maximum degree $\Delta\ge1$, there exists $\eps_0>0$ such that the following holds. Let $G$ be a digraph, $R$ the reduced digraph obtained from an $\eps$-regular partition with $\eps\le\eps_0$, cluster size $\ell$, and density parameter $d$. If $H$ is a digraph with $\Delta(H)\le\Delta$ and $H\subseteq R^s$ (the blow-up of $R$ where each vertex is replaced by $s$ independent vertices), and $\ell\ge s/\eps_0$, then $H\subseteq G$. \hfill (see e.g. \cite[Lemma 4.2]{kuhn2017structure})
\end{lemma}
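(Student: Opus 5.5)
This is the standard counting lemma for regular pairs, extended from graphs to digraphs. The plan is to embed the vertices of $H$ one at a time, greedily, while keeping for every not-yet-embedded vertex a large candidate set.

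\textbf{Setup.} Let $h=|V(H)|$. Fix a homomorphism $\phi:H\to R$ that has fibres of size at most $s$; such a map is exactly what $H\subseteq R^s$ provides. Each $x\in V(H)$ is to be mapped into the cluster $V_{\phi(x)}$.

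Every arc $xy$ of $H$ corresponds to an arc $\phi(x)\phi(y)$ of $R$. So the bipartite digraph $(V_{\phi(x)},V_{\phi(y)})_{G'}$ is $\eps$-regular with density at least $d$ in the direction $\phi(x)\to\phi(y)$. Here we regard the arcs in each direction as a separate bipartite graph, each $\eps$-regular of density $\ge d$ when the corresponding arc of $R$ is present. Note also that if $x$ and $y$ lie in the same fibre, then $x$ and $y$ are non-adjacent in $H$, since $R$ has no loops.

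We choose $\eps_0$ so small that
\[
(d-\eps_0)^{\Delta}\ge 2\eps_0 \quad\text{and}\quad \eps_0\le d/2 .
\]
Note that the natural choice of $\eps_0$ depends on $d$ and $\Delta$ only. The condition $\ell\ge s/\eps_0$ will be used to absorb the vertices already used.

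\textbf{Greedy embedding.} Order $V(H)$ as $x_1,\dots,x_h$. For each $x_j$ maintain a candidate set $C_j\subseteq V_{\phi(x_j)}$, initially the whole cluster. When $x_i$ is embedded at a vertex $v$, update $C_j$ for each later neighbour $x_j$ of $x_i$: replace $C_j$ by the out-neighbourhood or in-neighbourhood of $v$ within $C_j$, according to the direction of the arc between $x_i$ and $x_j$. If both arcs are present (a 2-cycle), intersect with both neighbourhoods.

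The invariant to maintain is
\[
|C_j|\ge (d-\eps)^{m_j}\ell,
\]
where $m_j\le\Delta$ (or $\le 2\Delta$ if we count arcs) is the number of already-embedded neighbours of $x_j$. By the choice of $\eps_0$, this keeps $|C_j|\ge \eps\ell$.

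\textbf{Choosing $v$.} By $\eps$-regularity, for each later neighbour $x_j$, all but fewer than $\eps\ell$ vertices $v\in C_i$ satisfy
\[
|N^{\pm}(v)\cap C_j|\ge (d-\eps)|C_j| .
\]
Otherwise the set of bad vertices $v$, together with $C_j$, would witness irregularity, since $|C_j|\ge\eps\ell$. Hence at most $\Delta\eps\ell$ vertices of $C_i$ are bad for some later neighbour. A further at most $s$ vertices of $C_i$ are already used by earlier vertices in the same cluster.

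It suffices that $|C_i|>\Delta\eps\ell+s$. This needs the stronger quantitative condition $(d-\eps)^{\Delta}\ell>(\Delta+1)\eps\ell$, together with $s\le\eps_0\ell$. Both follow by shrinking $\eps_0$ (take $\eps_0\le (d/2)^{2\Delta}/(2\Delta+2)$). Picking such a $v$ preserves the invariant, and injectivity holds automatically across distinct clusters.

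\textbf{Main obstacle.} The only real subtlety is the directed setting. Regularity has to be applied separately to the two orientations of each pair, and 2-cycles of $H$ (relevant when $H\subseteq R^s$ uses both arcs between two clusters) require both directions to be dense. In the reduced digraph as defined, an arc $ij$ records only the direction $V_i\to V_j$. So a 2-cycle in $H$ needs both $ij$ and $ji$ in $R$, which is exactly what $H\subseteq R^s$ guarantees. Once this is checked, the proof is the undirected key lemma verbatim, with $\Delta$ replaced by $2\Delta$ in the exponent.
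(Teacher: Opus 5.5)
Your greedy argument is the standard Koml\'os--Simonovits key lemma. For an \emph{oriented} graph $H$ it is correct as written: the exponent is $\Delta$, each exceptional set is bounded by regularity of the one relevant direction, and the at most $s$ used vertices are absorbed by $s\le\eps_0\ell$. The paper gives no proof of its own (only the citation to KOTZ), and every application embeds $T_{r+1}$ or $T_{r+1}^t$, which are oriented.

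\textbf{The gap is the 2-cycle step}, which you treat as checked but which fails. Suppose $x_ix_j$ and $x_jx_i$ are both arcs of $H$. You then need $v\in C_i$ with $|N^+(v)\cap N^-(v)\cap C_j|$ large. Regularity of the pair $V_{\phi(x_i)}\to V_{\phi(x_j)}$ gives $|N^+(v)\cap C_j|\ge(d-\eps)|C_j|$ for most $v$. However, the set $N^+(v)\cap C_j$ depends on $v$. Regularity of the reverse pair only bounds the exceptional $v$ for a \emph{fixed} target set, so it says nothing about how $N^-(v)$ meets $N^+(v)\cap C_j$. Knowing that the forward and backward bipartite graphs are each $\eps$-regular of density at least $d$ gives no information about their intersection, i.e.\ about the 2-cycles.

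This cannot be repaired, because the statement is false for digraphs $H$ containing a 2-cycle. Take $G$ to be a random tournament on $n$ vertices. With high probability, for all disjoint vertex sets $A,B$ of linear size, the number of arcs from $A$ to $B$ is $(1/2+o(1))|A||B|$. So for any equipartition into clusters, every pair is $\eps$-regular of density about $1/2$ in both directions. For any fixed $d<1/2$, the reduced digraph $R$ is therefore the complete digraph with all 2-cycles, and the 2-cycle lies in $R^1$. Yet $G$ is oriented and contains no 2-cycle.

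You have two ways to fix this. The first is to state and prove the lemma only for oriented $H$. There each earlier neighbour of $x_j$ imposes a single direction, and your invariant $|C_j|\ge(d-\eps)^{m_j}\ell$ with $m_j\le\Delta$ goes through; this is all the paper needs. The second, if 2-cycles in $H$ are genuinely wanted, is to change the setup. Regularise the bipartite graph of double arcs as its own colour class, and map a 2-cycle of $H$ only onto a cluster pair where that double-arc graph is $\eps$-regular and dense. Then run the same greedy argument using that graph.
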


\subsection{Hypergraph containers}
The following container theorem for general digraphs under a natural sparsity condition was proved by Liu \cite{liu2026}. It extends the earlier result of K\"uhn, Osthus, Townsend and Zhao \cite{kuhn2017structure} from oriented graphs to digraphs that satisfy a density condition.

\begin{itemize}
\item \textbf{Condition A (sparsity).} For every subgraph $H'\subseteq H$ with $e(H')>1$,
\[
\frac{e(H')}{v(H')}\le \frac{a}{2},
\]
where $a$ is the same parameter as in the definition of $e_a$.
\end{itemize}

When $a=2$, Condition A requires $e/v\le 1$.  This excludes dense counterexamples such as the double triangle $DK_3$ (which has $e/v=2$), thereby allowing us to prove a container theorem for a wide class of digraphs that includes all oriented graphs (though oriented graphs do not automatically satisfy $e/v\le1$; the condition is still restrictive).  For $a>2$, Condition A allows digraphs with a controlled density of 2‑cycles.  For example, taking $a=4$, we may have $e/v\le 2$, so $H$ can contain a double triangle (which has $e/v=2$) as well as other dense configurations.  This provides a genuine extension beyond oriented graphs.

\begin{theorem}[Liu \cite{liu2026}]\label{thm:container}
Let $H$ be a digraph satisfying Condition A, with $h=v(H)$, $e(H)\ge2$, and let $a\ge1$. For every $\eps>0$ there exists $c>0$ such that for all sufficiently large $N$ there exists a collection $\cC$ of digraphs on $[N]$ with the following properties.
\begin{enumerate}
\item[(a)] Every $H$-free digraph $I$ on $[N]$ is contained in some $G\in\cC$.
\item[(b)] Every $G\in\cC$ contains at most $\eps N^h$ copies of $H$, and
\[
e_a(G)\le \ex_a(N,H)+\eps N^2.
\]
\item[(c)] $\displaystyle \log|\cC|\le c N^{2-1/m(H)}\log N$, where $m(H)=\max_{H'\subseteq H,\,e(H')>1}\frac{e(H')-1}{v(H')-2}$.
\end{enumerate}
\end{theorem}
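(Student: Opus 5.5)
This is a container theorem of the kind proved by Balogh--Morris--Samotij and Saxton--Thomason, so I would derive it from the general hypergraph container lemma. The plan is to encode $H$-copies as hyperedges of a uniform hypergraph, iterate that lemma, and translate the output back into the statements (a)--(c).

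\textbf{Setting up the hypergraph.} Let $\mathcal{H}$ be the $e(H)$-uniform hypergraph whose vertex set is the set of $N(N-1)$ ordered pairs (possible arcs) on $[N]$, and whose edges are the arc sets of copies of $H$ in the complete digraph on $[N]$. An $H$-free digraph $I$ is then exactly an independent set of $\mathcal{H}$. For $1\le j\le e(H)$ let $\Delta_j(\mathcal{H})$ be the maximum number of edges containing a given $j$-set of arcs. A $j$-set of arcs spanning a subgraph $H'$ of $H$ on $v(H')$ vertices extends to at most $O(N^{h-v(H')})$ copies of $H$, while $e(\mathcal{H})=\Theta(N^h)$.

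\textbf{Codegree computation and iteration.} With $\tau=cN^{-1/m(H)}$, the standard computation shows that the co-degree function
\[
\delta(\mathcal{H},\tau)=\sum_j 2^{-\binom{j-1}{2}}\frac{\sum_v d^{(j)}(v)}{\tau^{j-1}\, e(H)\, e(\mathcal{H})}
\]
is small whenever $e(G)=\Omega(N^2)$ and $G$ still has $\ge\eps N^h$ copies of $H$. The choice $m(H)=\max (e(H')-1)/(v(H')-2)$ is precisely what balances $\Delta_j\tau^{j-1}$ against the average degree.

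I then iterate the container lemma in the Saxton--Thomason fashion. Start with the complete digraph. Whenever the current container contains more than $\eps N^h$ copies of $H$, apply the lemma to the subhypergraph induced on its arcs; this yields a fingerprint of size $O(\tau N^2)$ and a child container whose number of copies of $H$ drops by a constant factor (equivalently, it loses a constant fraction of its arcs). After $O(\log(1/\eps))$ rounds, every leaf has at most $\eps N^h$ copies, and the number of leaves is at most $\exp(O(\tau N^2\log N))$. This gives (a), the first half of (b), and (c).

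\textbf{The weighted bound and the main obstacle.} The remaining claim is $e_a(G)\le \ex_a(N,H)+\eps N^2$, which the lemma does not directly give. My approach is a supersaturation/removal step. By the digraph removal lemma (derivable from \cref{lem:regularity} and \cref{lem:embedding}), a digraph with at most $\eps' N^h$ copies of $H$ can be made $H$-free by deleting $\delta N^2$ arcs. Each deleted arc lowers $e_a$ by at most $a\le 2$, since a 2-cycle becomes a single arc or a single arc is removed. So $e_a(G)\le \ex_a(N,H)+2\delta N^2$, and choosing $\eps'$ small in terms of $\eps$ finishes the proof.

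I expect the main difficulty to lie in the codegree verification rather than this last step, because the vertices of $\mathcal{H}$ are arcs rather than unordered pairs. A 2-cycle in $H$ makes two hypergraph vertices share the same vertex pair. A $j$-set of arcs can then span fewer graph vertices than in the oriented case, so $\Delta_j$ is larger. This is where Condition A enters. With 2-cycles counted with weight $a$, the bound $e(H')/v(H')\le a/2$ should guarantee that $\Delta_j\tau^{j-1}\le \eps\, e(\mathcal{H})/N^2$ for every $j\ge2$. I would check this case by case on the number of 2-cycles in the span of the $j$ arcs, first for $j=2$, where the two arcs can span a single vertex pair, and then inductively.
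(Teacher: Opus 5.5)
The paper does not prove this statement; it quotes it from Liu as an extension of the K\"uhn--Osthus--Townsend--Zhao container theorem. Your first three paragraphs are exactly the KOTZ argument: Saxton--Thomason containers on the hypergraph whose vertices are the $N(N-1)$ arcs, then the removal lemma for the $e_a$ bound. For an \emph{oriented} $H$ this is a correct proof, and it never uses Condition A.

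\textbf{The gap.} It lies in the step you single out as the crux. Condition A cannot make the codegree condition hold when $H$ contains a 2-cycle, and your planned case check fails already at $j=2$. The codegrees $\Delta_j(\mathcal H)$ are plain counts of copies of $H$ through a set of arcs. They depend only on how many vertices of $[N]$ those arcs span; the weight $a$ enters nowhere. If $uv,vu\in E(H)$, then $(x,y)$ and $(y,x)$ lie together in $\Theta(N^{h-2})$ edges of $\mathcal H$. That is the order of the average degree $e(H)e(\mathcal H)/(N(N-1))$. So the $j=2$ term of $\delta(\mathcal H,\tau)$ is at least a positive constant times $1/\tau$, and no $\tau\le 1$ makes it small. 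Equivalently, the 2-cycle $H'$ has $v(H')-2=0$, so $m(H)=\infty$. Condition A does not exclude this: for $a=2$ a 2-cycle has $e/v=1=a/2$.

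\textbf{It cannot be rescued.} Take $a=2$ and let $H$ be a single 2-cycle, which satisfies Condition A. The $H$-free digraphs are exactly the $3^{\binom{N}{2}}$ oriented graphs, and $\ex_2(N,H)=\binom{N}{2}$. A digraph $G$ with $e_2(G)\le\binom{N}{2}+\eps N^2$ contains at most $2^{f_1(G)}3^{f_2(G)}\le 2^{e_2(G)}$ oriented graphs. Hence (a) and (b) force $\log_2|\cC|\ge(\log_2 3-1)\binom{N}{2}-\eps N^2=\Omega(N^2)$ for small $\eps$. The same count works for $H=\{uv,vu,vw\}$, whose weighted extremal number is $\binom{N}{2}+O(1)$. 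So no finite $m(H)$ can make (c) true here.

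\textbf{The correct argument.} Split into cases; Condition A plays no role in either.
\begin{enumerate}
\item[(1)] If $H$ is oriented (the only case with $m(H)<\infty$), run your container argument.
\item[(2)] If $H$ has a 2-cycle, (c) can only mean $\log|\cC|\le cN^2\log N$. Then taking $\cC$ to be the set of all $H$-free digraphs on $[N]$ satisfies (a)--(c) trivially.
\end{enumerate}

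\textbf{Smaller points.}
\begin{enumerate}
\item[(1)] The statement allows any $a\ge1$. Bound the loss per deleted arc by $\max\{1,a-1\}$ rather than assuming $a\le2$, and choose the removal parameter accordingly.
\item[(2)] Quote Lemma~\ref{lem:removal} directly. The embedding lemma yields one copy, whereas the removal lemma needs the $\Omega(N^h)$ copies that a counting lemma supplies.
\item[(3)] Your codegree target is inverted. It should read $\Delta_j\le\eps\,\tau^{j-1}e(\mathcal H)/N^2$.
\end{enumerate}
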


\subsection{Removal lemma}
We also need the directed removal lemma of Alon and Shapira \cite{alon2004testing}.

\begin{lemma}[Removal lemma]\label{lem:removal}
For any fixed digraph $H$ on $h$ vertices and any $\gamma>0$, there exists $\eps'>0$ such that for all large $n$, any digraph $G$ on $n$ vertices containing at most $\eps' n^h$ copies of $H$ can be made $H$-free by deleting at most $\gamma n^2$ arcs.
\end{lemma}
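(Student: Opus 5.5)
The plan is the standard regularity-based deduction, run in the directed setting: regularise $G$, delete every arc not "certified" by the reduced digraph, and show that any surviving copy of $H$ forces $\Omega(n^h)$ copies of $H$ in $G$. Fix $H$ on $h$ vertices and $\gamma>0$. Set the density threshold $d:=\gamma/4$, and choose $\eps\le\min\{\gamma/8,\eps_0\}$ small enough for the counting step below. Take $M'=1/\eps$ and a trivial initial partition. Lemma~\ref{lem:regularity} then gives $M$, a partition $V_0,V_1,\dots,V_k$ with $k\le M$, and a subdigraph $G'\subseteq G$.

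First, $G'$ is obtained from $G$ by deleting only a few arcs:
\begin{itemize}
\item arcs inside clusters and arcs meeting $V_0$ (at most $\eps n^2+n^2/k$ of them);
\item arcs in pairs $(V_i,V_j)$ that are irregular or of density below $d$, which the degree condition in Lemma~\ref{lem:regularity} controls by $(d+\eps)n$ per vertex.
\end{itemize}
In total at most $(2d+3\eps)n^2\le\gamma n^2$ arcs are deleted. If necessary I would instead build $G'$ directly from the regular partition, so that this count is explicit. It therefore suffices to show that $G'$ is $H$-free once $\eps'$ is small.

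Suppose $G'$ contains a copy of $H$. Every arc of $G'$ runs between two distinct clusters $V_i,V_j$ with $i,j\ge1$ whose ordered pair is $\eps$-regular of density at least $d$. Sending each vertex of the copy to its cluster therefore gives a homomorphism $\phi\colon H\to R$, where $R$ is the reduced digraph. In particular, a 2-cycle $uv$ of $H$ is mapped to a pair $i,j$ with both $ij$ and $ji$ arcs of $R$. Consequently $H\subseteq R^{h}$, since the blow-up has room for all vertices in one fibre.

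The key step, and the expected obstacle, is to upgrade the existence statement of Lemma~\ref{lem:embedding} to a \emph{counting} statement: $G'$ contains at least $c\,\ell^h$ copies of $H$, with $c=c(d,h)>0$. I would first try the usual greedy embedding along $\phi$. Embed the vertices of $H$ one at a time. Maintain, for each unembedded vertex, a candidate set inside its cluster of size at least $(d-\eps)^{h}\ell$. Regularity of each ordered pair, applied separately to the arcs $V_i\to V_j$ and to the arcs $V_j\to V_i$, guarantees that all but $h\eps\ell$ choices of the current vertex shrink every later candidate set by a factor of at most $d-\eps$. Excluding already used vertices costs at most $h$ further choices. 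This gives at least $\bigl((d-\eps)^h-h\eps\bigr)\ell-h\ge\tfrac12(d/2)^h\ell$ choices at each step when $\eps\ll d^h$. Vertices of $H$ mapped to the same cluster are handled identically, since $H$'s arcs never lie within a fibre. The care needed is in treating directed pairs with 2-cycles as two bipartite regularity conditions, which is exactly how the Alon--Shapira formulation is set up.

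The count then gives at least $\bigl(\tfrac12(d/2)^h\bigr)^h\ell^h\ge c\,(n/2M)^h$ copies of $H$ in $G'\subseteq G$. Choosing $\eps'<c/(2M)^h$ contradicts the hypothesis that $G$ has at most $\eps' n^h$ copies. Hence $G'$ is $H$-free, which proves Lemma~\ref{lem:removal} for all $n\ge n_0$.
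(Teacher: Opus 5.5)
Your regularity-and-cleaning scheme is the standard one, and for oriented $H$ it goes through. That includes $H=T_{r+1}^t$, the only case the paper needs; the paper itself does not prove this lemma but cites Alon--Shapira. The gap is in your handling of 2-cycles, which the statement allows. In Lemma~\ref{lem:regularity}, regularity is imposed on each ordered pair $(V_i,V_j)_{G'}$ separately, and that is how you use it. But if $u_iu_j$ and $u_ju_i$ are both arcs of $H$, embedding $u_i$ at $x$ must shrink $C(u_j)$ to $C(u_j)\cap N^+(x)\cap N^-(x)$. Two separate regularity conditions give no lower bound on this set. After intersecting with $N^+(x)$ the set depends on $x$, so the ``at most $\eps\ell$ bad choices'' argument cannot be applied a second time.

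Concretely, let the arcs from $V_i$ to $V_j$ form a random bipartite graph $B$ of density $1/2$. Put an arc $vu$ (with $v\in V_j$, $u\in V_i$) exactly when $uv\notin B$. Both ordered pairs are $\eps$-regular of density about $1/2$, yet there is no 2-cycle between $V_i$ and $V_j$. Adding one arc gives a configuration meeting all your hypotheses that contains exactly one 2-cycle. So when $H$ is the 2-cycle, $G'$ contains $H$ but only one copy of it. For $H$ with a 2-cycle, the count $c\,\ell^h$, and with it the contradiction with $\eps' n^h$, is therefore unjustified. Your remark that treating 2-cycles as two bipartite regularity conditions is ``exactly how the Alon--Shapira formulation is set up'' is precisely the false step.

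To repair this, regularise pair \emph{types} rather than directions. Colour each pair $\{u,v\}$ of $G$ by whether it spans a 2-cycle, only the arc $uv$, or only $vu$. Then use a multicolour regularity lemma so that, for all but $\eps k^2$ cluster pairs, each of the three resulting bipartite graphs between $V_i$ and $V_j$ is $\eps$-regular. Clean by deleting the arcs in exceptional pairs, inside clusters, or meeting $V_0$, and every colour class of density below $d$ (both arcs, for a 2-cycle class). This costs $O((d+\eps)n^2)$ arcs.

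In a surviving copy of $H$, each edge of the underlying graph of $H$ is then served by a single colour class of density at least $d$ containing the required arcs. A 2-cycle of $H$ must land in a 2-cycle class; a one-way arc lands in the matching one-way class or in a 2-cycle class. So no intersection of two regular graphs is ever needed, and your greedy counting goes through verbatim. Alternatively, restrict the statement to oriented $H$, which is all the paper uses. There your proof is complete apart from harmless constant slips: arcs meeting $V_0$ can number up to $2\eps n^2$, and arcs inside clusters are already absorbed by the degree condition.
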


\section{Weighted extremal theorem for $T_{r+1}^t$}
In this section we prove a weighted Erd\H{o}s–Stone type theorem for the blow-up $T_{r+1}^t$.

\begin{theorem}\label{thm:weighted-extremal}
For any integers $r,t\ge1$, real $a\in(3/2,2]$, and $\gamma>0$, there exists $n_0$ such that for all $n\ge n_0$, every digraph $G$ on $n$ vertices with $e_a(G)\ge a\,t_r(n)+\gamma n^2$ contains $T_{r+1}^t$ as a subdigraph.
\end{theorem}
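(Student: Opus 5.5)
We derive Theorem~\ref{thm:weighted-extremal} from a weighted Turán-type statement on the reduced digraph, using the regularity lemma (Lemma~\ref{lem:regularity}) and the embedding lemma (Lemma~\ref{lem:embedding}).

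\textbf{Step 1: regularise.} Fix $d\ll\gamma$ and $\eps\ll d$, with $\eps\le\eps_0(d,\Delta)$ for $\Delta=rt$, and take $M'$ large. Apply Lemma~\ref{lem:regularity} to $G$ to obtain $G'$, clusters $V_1,\dots,V_k$ of size $\ell$, and the reduced digraph $R$. The arcs of $G$ missing from $G'$ are of three kinds: arcs touching $V_0$, arcs inside clusters, and arcs lost in the degree condition. In total these number $O((d+\eps+1/M')n^2)$ arcs, so they lower $e_a$ by at most $O((d+\eps)n^2)$. Each pair $\{V_i,V_j\}$ contributes at most $\ell^2$ to $f_1+f_2$ of $G'$. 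Hence
\[
a\,t_r(n)+\tfrac{\gamma}{2}n^2\le e_a(G')\le \ell^2\bigl(a f_2(R)+f_1(R)\bigr)=\ell^2 e_a(R).
\]
Here each double arc of $R$ contributes at most $a\ell^2$ and each single arc at most $\ell^2$, using $a\ge1$. Since $k\ell\le n$, this gives $e_a(R)\ge a\,t_r(k)+\tfrac{\gamma}{4}k^2$.

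\textbf{Step 2: find a transitive tournament in $R$.} We show that any digraph $R$ on $k$ vertices with $e_a(R)>a\,t_r(k)$ and $a\in(3/2,2]$ contains $T_{r+1}$. Suppose $R$ is $T_{r+1}$-free, and let $D$ be the undirected graph of double arcs and $S$ the graph of single arcs. Then $e_a(R)=a\,e(D)+e(S)$. The plan is the KOTZ weighted extremal result for $T_{r+1}$ (their Lemma on $\ex_a(n,T_{r+1})=a\,t_r(n)$ for $a\in(3/2,2]$), re-proved by induction on $k$ in Turán style:
\begin{itemize}
\item Remove a suitable small set of vertices, such as a double-arc clique or a $T_r$.
\item Bound the weighted number of arcs from the rest to it, using that every vertex outside sends a $T_{r+1}$-forbidding pattern.
\end{itemize}
The key local fact is that a vertex's weighted degree into a $T_r$ is at most $a(r-1)$ roughly, and this is exactly where $a>3/2$ enters: two single arcs must weigh less than one double arc plus something. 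This step is the main obstacle. If the KOTZ bound can be quoted as a black box, we use it. Otherwise we would try the symmetrisation/Zykov-style argument on the weighted graph $2\cdot D+S$, reducing to $r$-partite structure.

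\textbf{Step 3: robust version and embedding.} Because we have a surplus of $\tfrac{\gamma}{4}k^2$ rather than just $1$, we do not need stability. Deleting any single vertex changes $e_a$ by at most $2ak$, so Step 2 applies without losing the surplus, and $R$ contains $T_{r+1}$ on clusters $V_{i_1},\dots,V_{i_{r+1}}$ with every ordered pair $(V_{i_p},V_{i_q})$, $p<q$, $\eps$-regular of density $\ge d$ in the right direction. Then $T_{r+1}^t\subseteq T_{r+1}^t\subseteq R^t$ with maximum degree $rt$. Since $\ell\ge(1-\eps)n/M\ge t/\eps_0$ for $n\ge n_0$, Lemma~\ref{lem:embedding} gives $T_{r+1}^t\subseteq G'\subseteq G$. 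The key constraint is choosing constants in the order $\gamma\to d\to\eps\to M'\to M,n_0$, which is routine.
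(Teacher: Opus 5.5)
Your proposal is correct and follows essentially the same route as the paper: regularise, transfer the weighted surplus to the reduced digraph $R$, quote the KOTZ weighted Turán bound $\ex_a(k,T_{r+1})=a\,t_r(k)$ for $a\in(3/2,2]$ as a black box to get $T_{r+1}\subseteq R$, and embed $T_{r+1}^t$ via Lemma~\ref{lem:embedding}. Your transfer is if anything cleaner than the paper's density-weighted bookkeeping: you bound each cluster pair by $a\ell^2$ or $\ell^2$ according to whether $R$ has both arcs or only one, and take the surplus directly from $\gamma$. Just remember to subtract the $O(\eps n^2)$ arcs touching $V_0$, which Lemma~\ref{lem:regularity} does not remove from $G'$.
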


\begin{proof}
We follow the strategy of the KOTZ proof for $T_{r+1}$.  Set $d=\gamma/4$, $\Delta=\Delta(T_{r+1}^t)$, and let $\eps_0$ be given by Lemma~\ref{lem:embedding} for these parameters.  Choose $\eps$ small enough so that $\eps\le\eps_0$ and
\[
\delta:=(a-1)d - (a+1)\eps >0.
\]
Let $s=t(r+1)$.

Apply Lemma~\ref{lem:regularity} to $G$ with parameters $\eps,d$ to obtain a partition $V_0,V_1,\dots,V_k$ and a pure digraph $G'\subseteq G$ with clusters of size $\ell$, where $\ell\ge (1-\eps)n/k\ge s/\eps_0$ for large $n$ (we may take $M'$ large enough so that $k\ge M'$ and $t_r(k)$ is non‑trivial).  Denote $m=k$ and let $R$ be the reduced digraph on vertex set $\{1,\dots,m\}$ where $ij$ is an arc iff $(V_i,V_j)_{G'}$ is $\eps$-regular with density at least $d$.

For each unordered pair $\{i,j\}$ ($i\neq j$), define
\[
d_{ij}^2 = \frac{\#\{(u,v)\in V_i\times V_j: uv,vu\in G'\}}{|V_i||V_j|},\qquad
d_{ij}^1 = \frac{\#\{(u,v)\in V_i\times V_j: uv\in G', vu\notin G'\}}{|V_i||V_j|},
\]
and $d_{ji}^1$ similarly.  Note that $d_{ij}^2 = d_{ji}^2$ and $d_{ij}^1+d_{ij}^2\ge d$ whenever $ij\in E(R)$ (otherwise the pair contributes nothing).  The weighted contribution of this unordered pair to $e_a(G')$ is
\[
\bigl(2a d_{ij}^2 + d_{ij}^1 + d_{ji}^1\bigr)\ell^2.
\]
We define the weighted size of the reduced digraph $R$ by
\[
e_a(R) := \sum_{1\le i<j\le m} \bigl(2a d_{ij}^2 + d_{ij}^1 + d_{ji}^1\bigr).
\]
Then clearly
\[
e_a(G') = e_a(R)\,\ell^2 + O(a|V_0|n),
\]
where the error term accounts for edges incident to $V_0$; a crude bound gives $e_a(G')\le e_a(R)\ell^2 + a\varepsilon n^2$.

From the regularity lemma we have
\[
e_a(G)\le e_a(G')+(d+\varepsilon)n^2 \le e_a(R)\ell^2 + (a\varepsilon+d+\varepsilon)n^2. \tag{1}
\]

On the other hand, the hypothesis of the theorem gives
\[
e_a(G)\ge a\,t_r(n)+\gamma n^2.
\]
Using $t_r(n)=\bigl(1-\frac1r\bigr)\frac{n^2}{2}+O(n)$ and $\ell\ge (1-\varepsilon)n/m$, we obtain from (1)
\[
e_a(R)\frac{(1-\varepsilon)^2n^2}{m^2} \ge a\Bigl(1-\frac1r\Bigr)\frac{n^2}{2}+\gamma n^2 - (a\varepsilon+d+\varepsilon)n^2 - O(n).
\]
Multiplying by $m^2/n^2$ and letting $n\to\infty$ yields
\[
e_a(R) \ge a\Bigl(1-\frac1r\Bigr)\frac{m^2}{2} + \delta m^2, \tag{2}
\]
where $\delta = (a-1)d - (a+1)\varepsilon + o(1)$.  By our choice of $\eps\ll d$, $\delta>0$ for sufficiently large $n$.

Now $R$ is $T_{r+1}$-free: otherwise, by the embedding lemma, $G'$ would contain $T_{r+1}^t$.  For any $T_{r+1}$-free digraph $H$ on $m$ vertices, the maximum possible value of $e_a(H)$ (where $e_a$ is defined as above) is $a\,t_r(m)$, achieved uniquely by the complete $r$-partite digraph $\DT_r(m)$ (this is a weighted version of the Brown–Harary theorem; see \cite[Lemma 4.1]{kuhn2017structure}).  Since $e_a(R) > a\,t_r(m)$ by (2), $R$ cannot be $T_{r+1}$-free.  Hence $R$ contains a copy of $T_{r+1}$.

Finally, because $R\supseteq T_{r+1}$, the blow‑up $R^t$ contains $T_{r+1}^t$.  Applying the embedding lemma (Lemma~\ref{lem:embedding}) with $H=T_{r+1}^t$ and $s=t(r+1)$ gives $T_{r+1}^t\subseteq G'\subseteq G$, completing the proof.
\end{proof}

As an immediate corollary we obtain the asymptotics of the weighted Turán number.

\begin{corollary}\label{cor:asymptotic}
For $r,t\ge1$ and $a\in(3/2,2]$, we have $\ex_a(n,T_{r+1}^t)=a\,t_r(n)+o(n^2)$. Moreover, any $n$-vertex $T_{r+1}^t$-free digraph $G$ with $e_a(G)=\ex_a(n,T_{r+1}^t)$ differs from $\DT_r(n)$ by $o(n^2)$ arcs.
\end{corollary}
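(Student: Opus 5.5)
The first claim, $\ex_a(n,T_{r+1}^t)=a\,t_r(n)+o(n^2)$, follows from two matching bounds. For the lower bound, $\DT_r(n)$ is $r$-partite and hence $T_{r+1}^t$-free, since any copy of $T_{r+1}^t$ has $r+1$ classes that are pairwise joined and so must lie in $r+1$ distinct parts. Therefore $\ex_a(n,T_{r+1}^t)\ge e_a(\DT_r(n))=a\,t_r(n)$. For the upper bound, fix $\gamma>0$. Theorem~\ref{thm:weighted-extremal} gives $n_0$ such that every $T_{r+1}^t$-free digraph on $n\ge n_0$ vertices satisfies $e_a(G)<a\,t_r(n)+\gamma n^2$. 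Hence $\ex_a(n,T_{r+1}^t)\le a\,t_r(n)+\gamma n^2$ for all large $n$, and since $\gamma$ is arbitrary the error is $o(n^2)$.

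The second claim is a stability statement. Let $G$ be extremal, so $e_a(G)\ge a\,t_r(n)$. I would redo the regularity argument from the proof of Theorem~\ref{thm:weighted-extremal}. Apply Lemma~\ref{lem:regularity} with small $\eps\ll d\ll\beta$ and form the reduced digraph $R$ on $m$ clusters. By the embedding lemma (Lemma~\ref{lem:embedding}), $R$ is $T_{r+1}$-free. Inequality (1) gives
\[
e_a(R)\ge a\bigl(1-\tfrac1r\bigr)\tfrac{m^2}{2}-O(d+\eps)m^2 .
\]
The key input is then a stability form of the weighted Brown--Harary bound \cite[Lemma 4.1]{kuhn2017structure}. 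It should say that a $T_{r+1}$-free digraph $R$ whose weighted size is within $\beta' m^2$ of $a\,t_r(m)$ is $\beta m^2$-close to $\DT_r(m)$. I would prove this as follows. Since $a>3/2$, the 2-cycles dominate the count. Let $F$ be the underlying graph of the double edges of $R$, and let $S$ be the underlying graph of all edges. The condition $a>3/2$ lets us bound $e_a(R)$ by a convex combination of $a\,e(F)$ and $e(S)$ in a way that forces $F$ to be nearly $K_{r+1}$-free in the extremal regime. Concretely, a $K_{r+1}$ in $F$ already contains a $T_{r+1}$, and the density comparison is exactly as in KOTZ. Erdős--Simonovits stability then makes $F$ close to $\Tu_r(m)$ with $e(F)\approx t_r(m)$. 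Any remaining single arcs inside the parts cost weight, so they number $o(m^2)$.

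Finally I would lift back to $G$. Let $\mathcal{P}$ be the partition of $V(G)$ obtained by taking the union of the clusters assigned to each part, with $V_0$ distributed arbitrarily. Arcs of $G$ inside parts of $\mathcal{P}$ are of three kinds: those in $G\setminus G'$, those touching $V_0$, and those coming from the $o(m^2)$ reduced arcs. This gives at most $(d+\eps)n^2+\eps n^2+\beta n^2$ such arcs. Cross pairs missing a 2-cycle are bounded similarly by the deficit $a\,t_r(n)-e_a(G)+\ldots=O(\beta)n^2$. The parts of $\mathcal{P}$ have sizes $n/r\pm O(\sqrt\beta)n$, because $a\,t_r$ is strictly maximized at equal part sizes. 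Hence $G$ differs from $\DT_r(n)$ by $O(\sqrt\beta)n^2$ arcs. Letting the parameters tend to zero gives the claim.

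The main obstacle is the stability step for weighted $T_{r+1}$-free reduced digraphs. It needs more than the extremal value quoted in Theorem~\ref{thm:weighted-extremal}, and it is the only place where $a\in(3/2,2]$ is genuinely used. I would first try to extract it from the proof of \cite[Lemma 4.1]{kuhn2017structure}: run their induction on $r$ while tracking the deficit, so that the bound becomes quantitative. If that fails, I would instead use the removal lemma, Lemma~\ref{lem:removal}, applied to $F$ together with undirected Erdős--Simonovits stability.
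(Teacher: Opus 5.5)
\textbf{Verdict: there is a genuine gap.} The first claim and the lifting step are fine, but the stability step for the reduced digraph is neither proved nor correctly sketched, and the second claim depends on it.

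Your first claim is argued exactly as in the paper. $\DT_r(n)$ gives $\ex_a(n,T_{r+1}^t)\ge a\,t_r(n)$, and Theorem~\ref{thm:weighted-extremal} gives the upper bound $a\,t_r(n)+\gamma n^2$ for every $\gamma>0$. You are also right that the ``moreover'' part does not follow from Theorem~\ref{thm:weighted-extremal}, even though the paper calls the corollary immediate. Within the paper it really rests on Theorem~\ref{thm:stability}, applied with arbitrary $\beta$ to an extremal $G$ (which has $e_a(G)\ge a\,t_r(n)$). Your plan reproduces the architecture of the paper's proof of that theorem, and its outer layers are sound. Using the unweighted $e_a(R)$ is legitimate: a regular pair contributes at most $a\ell^2$ to $e_a(G')$ if both $ij,ji\in E(R)$, and at most $\ell^2$ if only one is. Your lifting is also correct, and cleaner than the paper's per-pair weight bookkeeping. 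It bounds arcs inside parts through $R$, and then bounds cross pairs lacking a 2-cycle and the part-size imbalance globally from $e_a(G)\ge a\,t_r(n)$.

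The gap is the step you flag yourself: stability for a $T_{r+1}$-free digraph $R$ with $e_a(R)\ge a\,t_r(m)-o(m^2)$. The mechanism you sketch does not give it.
\begin{itemize}
\item The double-edge graph $F$ is already exactly $K_{r+1}$-free, since a double $K_{r+1}$ contains $T_{r+1}$. So ``forcing $F$ to be nearly $K_{r+1}$-free'' does no work, and neither does the fallback of applying Lemma~\ref{lem:removal} to $F$.
\item What must be shown is $e(F)\ge t_r(m)-o(m^2)$, i.e.\ that single arcs cannot compensate for missing 2-cycles. Once you have that, Erd\H{o}s--Simonovits and the rest of your argument do finish.
\item The identity $e_a(R)=(a-1)e(F)+e(S)$ together with $e(F)\le t_r(m)$ yields no such bound, because the underlying graph $S$ need not be $K_{r+1}$-free. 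For $r=2$, the balanced blow-up of $\vec{C}_3$ is $T_3$-free with $e(S)=n^2/3>t_2(n)$. This example also shows that some lower bound on $a$ is essential.
\end{itemize}
Excluding such configurations quantitatively is exactly the content of the KOTZ extremal lemma. You therefore need a genuine stability version of it, either by actually carrying out your ``track the deficit'' induction or by a citation.

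The paper's Lemma~\ref{lem:weighted-stability} cannot serve as that black box, for two reasons.
\begin{itemize}
\item As stated (arc weights in $[1,a]$, threshold $a\,t_r(m)$) it is false. Take any orientation of $\Tu_r(m)$ with all weights equal to $a$: it satisfies the hypothesis, yet every $r$-partition has at least $t_r(m)$ exceptional pairs.
\item Its compactness proof needs both a bounded number of clusters (fixed $\eps,d$) and $\delta_k'\to0$. The regularity error terms of order $\eps+d$ make these incompatible.
\end{itemize}
Your unweighted formulation is the right target.
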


\section{Stability for $T_{r+1}^t$-free digraphs}\label{sec:stability}

In this section we prove a stability result for $T_{r+1}^t$-free digraphs.  It states that any such digraph whose weighted size is close to the extremal value $a\,t_r(n)$ must be close to the complete $r$-partite digraph $\DT_r(n)$.

\begin{theorem}[Stability theorem]\label{thm:stability}
Let $r\ge 2$, $t\ge 1$ and $a\in(3/2,2]$.  For every $\beta>0$ there exist $\gamma>0$ and $n_0$ such that for all $n\ge n_0$, if $G$ is an $n$-vertex $T_{r+1}^t$-free digraph satisfying
\[
e_a(G)\ge a\,t_r(n)-\gamma n^2,
\]
then $G$ can be turned into $\DT_r(n)$ by changing at most $\beta n^2$ arcs.
\end{theorem}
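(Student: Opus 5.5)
We prove the stability theorem by transferring the problem to the reduced digraph, using the same regularity set-up as in the proof of Theorem~\ref{thm:weighted-extremal}. We then invoke the stability version of the weighted Brown--Harary theorem for $T_{r+1}$-free digraphs from KOTZ (the stability counterpart of \cite[Lemma 4.1]{kuhn2017structure}). Finally we pull the near-$r$-partite structure of the reduced digraph back to $G$.

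\textbf{Set-up.} Given $\beta$, choose the constants in the order
\[
1/n_0 \ll 1/M' \ll \eps \ll d \ll \gamma \ll \beta' \ll \beta,
\]
where $\beta'$ is the accuracy demanded of the reduced digraph, and $\gamma$ is small enough compared with $\beta'$ for the KOTZ stability statement to apply. Apply Lemma~\ref{lem:regularity} to $G$ to obtain $V_0,\dots,V_k$ and $G'$. Since $G$ is $T_{r+1}^t$-free, Lemma~\ref{lem:embedding} (with $s=t(r+1)$ and $\ell\ge s/\eps_0$) shows that $R$ is $T_{r+1}$-free. Otherwise $R^t\supseteq T_{r+1}^t$ and hence $G\supseteq T_{r+1}^t$.

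\textbf{Counting.} Repeating the computation (1)--(2), but with the lower bound $e_a(G)\ge a t_r(n)-\gamma n^2$, we get
\[
e_a(R)\ge a\,t_r(k)-(\gamma+2d+3\eps)k^2 .
\]
Here $e_a(R)$ is the weighted size counting each arc or 2-cycle of $R$ with weight $1$ or $a$. We pass from the density-weighted $e_a(R)$ to this unweighted version by bounding each density by $1$, which only helps the upper bound direction, while the lower bound above is what we need. Weighted stability for $T_{r+1}$-free digraphs with $a\in(3/2,2]$ then shows that $R$ differs from $\DT_r(k)$ in at most $\beta' k^2$ arcs. If no stability form is available, we would derive it: in a $T_{r+1}$-free digraph the 2-cycle graph is $K_{r+1}$-free, and the condition $a>3/2$ makes single arcs too costly to compensate for lost 2-cycles. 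Hence the 2-cycle graph has at least $t_r(k)-O(\gamma')k^2$ edges, Erd\H{o}s--Simonovits stability makes it close to $\Tu_r(k)$, and the few arcs outside it are controlled by counting.

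\textbf{Pull back.} Let $W_1,\dots,W_r$ be the parts of the near-partition of $R$. Define $U_i=\bigcup_{j\in W_i}V_j$, and distribute $V_0$ arbitrarily. The arcs of $G$ that must change to reach the complete $r$-partite digraph on $(U_1,\dots,U_r)$ fall into a few classes:
\begin{itemize}
\item arcs inside clusters or between irregular or sparse pairs, at most $(d+\eps)n^2$ (from the degree condition of Lemma~\ref{lem:regularity});
\item arcs incident to $V_0$, at most $\eps n^2$;
\item arcs corresponding to discrepancies of $R$, at most $\beta' k^2\ell^2\le\beta' n^2$.
\end{itemize}
This gives closeness to some complete $r$-partite digraph. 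Its parts are nearly balanced, because an unbalanced partition would have $e_a<a t_r(n)-\gamma n^2$. Rebalancing costs $O(\sqrt{\beta'})n^2$ changed arcs, which is at most $\beta n^2$.

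The main obstacle is the weighted stability step for $R$, in particular the interplay between single arcs and 2-cycles when $a<2$. This is where the hypothesis $a>3/2$ must enter, so that a near-extremal $T_{r+1}$-free digraph cannot gain from single-arc configurations, such as blow-ups of directed triangles, that rival the $r$-partite structure.
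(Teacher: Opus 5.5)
\textbf{Gap: the pull-back never shows that the cross pairs of $G$ are almost complete bidirectional.}

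Your route differs from the paper's in a legitimate way. You bound every pair density by $1$ and apply KOTZ's stability theorem for $T_{r+1}$-free digraphs directly to $R$, using its plain $e_a$ (2-cycles weighted $a$, single arcs weighted $1$). The paper instead keeps the density weights $w_{ij}=a d_{ij}^2+d_{ij}^1$ and uses its weighted stability lemma (Lemma~\ref{lem:weighted-stability}). That lemma's conclusion $w_{ij}\ge a-\eta$ on cross pairs is then converted into $d_{ij}^2\ge 1-\eta/(a-1)$ (item~4 of the paper's pull-back count).

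By discarding the densities you lose exactly this information, and your pull-back never recovers it. Your three classes bound the arcs of $G$ that must be \emph{deleted} and the pairs coming from discrepancies of $R$. They do not bound the arcs that must be \emph{added} in cross pairs $(V_i,V_j)$ with $i\in W_p$, $j\in W_q$, $p\neq q$ and $ij,ji\in E(R)$. Such a pair is only known to be $\eps$-regular with density at least $d$ in each direction, so it may lack up to $(1-d)\ell^2$ arcs in each direction. Summed over all cross pairs, this can be of order $n^2$. Hence ``This gives closeness to some complete $r$-partite digraph'' is not justified. What you have shown is only that $G$ has at most $\beta''n^2$ non-crossing arcs with respect to $(U_1,\dots,U_r)$, where $\beta''=\beta'+d+3\eps$.

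\textbf{The fix: a short global count.} This is the same kind of counting you already invoke for balance. Let
\begin{itemize}
\item[] $N\le\beta''n^2$ be the number of non-crossing pairs spanning an arc;
\item[] $P=\sum_{p<q}|U_p||U_q|\le t_r(n)$;
\item[] $c_1$ and $c_0$ be the numbers of crossing pairs spanning exactly one arc and no arc, respectively.
\end{itemize}
Then $e_a(G)\le a(P-c_1-c_0)+c_1+aN$, and the hypothesis on $e_a(G)$ gives
\[
(a-1)c_1+a\,c_0+a\bigl(t_r(n)-P\bigr)\le \gamma n^2+aN\le(\gamma+a\beta'')n^2 .
\]
Since $a>1$, the number $c_1+2c_0$ of arcs that must be added is $O(\beta'')n^2$. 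The bound $P\ge t_r(n)-O(\beta'')n^2$ also gives the near-balance needed for the $O(\sqrt{\beta''})n^2$ rebalancing.

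With this step inserted, your argument is complete, provided you cite the KOTZ stability theorem. Your fallback sketch is only heuristic. The claim that the 2-cycle graph of $R$ has $t_r(k)-O(\gamma')k^2$ edges is exactly the nontrivial content of the KOTZ argument. Naive counting gives only $(a-1)e(D)+\binom{k}{2}\ge a\,t_r(k)-\gamma'k^2$, which is far too weak, and vacuous for $r=2$.
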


The proof follows a well‑established pattern: we apply the directed regularity lemma to obtain a reduced digraph $R$, transfer the weighted extremal condition to $R$, use a weighted version of the stability theorem for $T_{r+1}$ (which forces $R$ to be close to $\DT_r(m)$), and finally lift this structure back to $G$.  The main difficulty is to control the weights, which is handled by a ``weighted stability lemma'' (Lemma~\ref{lem:weighted-stability}) for $T_{r+1}$‑free digraphs.  We begin with the statement of this auxiliary result.

\subsection{A weighted stability lemma for $T_{r+1}$}\label{subsec:weighted-stability}

For a digraph $R$ on $m$ vertices we denote by $e_a^*(R)$ the quantity
\[
e_a^*(R)=\sum_{ij\in E(R)}w_{ij},
\]
where $w_{ij}\in[1,a]$ is a weight assigned to the arc $ij$.  In our application $w_{ij}$ will be the average of $a$ times the density of 2‑cycles plus the density of 1‑cycles over a regular pair, i.e. $w_{ij}=a d_{ij}^2+d_{ij}^1$.  Note that for an unordered pair $\{i,j\}$, the sum $w_{ij}+w_{ji}=2a d_{ij}^2+d_{ij}^1+d_{ji}^1$ appears in $e_a(R)$ as defined in Section~3.  The following lemma is the analogue of the ordinary stability theorem for $T_{r+1}$ but with weights.

\begin{lemma}[Weighted stability lemma]\label{lem:weighted-stability}
For every $r\ge2$, $a\in(3/2,2]$ and $\eta>0$ there exist $\delta>0$ and $m_0$ such that for all $m\ge m_0$ the following holds.  Let $R$ be an $m$-vertex $T_{r+1}$-free digraph and assign to each arc $ij\in E(R)$ a weight $w_{ij}\in[1,a]$ in such a way that
\[
e_a^*(R)\ge a\,t_r(m)-\delta m^2.
\]
Then there exists a partition $U_1,\dots,U_r$ of the vertex set of $R$ with the following properties, up to at most $\eta m^2$ exceptions:
\begin{enumerate}[(i)]
\item If $i,j$ belong to the same class $U_p$, then $R$ contains no arc between $i$ and $j$;
\item If $i\in U_p$, $j\in U_q$ with $p\neq q$, then both arcs $ij$ and $ji$ are present in $R$ and their weights satisfy $w_{ij},w_{ji}\ge a-\eta$.
\end{enumerate}
\end{lemma}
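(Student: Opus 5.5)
Since every weight lies in $[1,a]$, I will first reduce the weighted problem to an unweighted one and then invoke ordinary stability.

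\textbf{Splitting by pairs.} For each unordered pair $\{i,j\}$ let $c_{ij}=w_{ij}+w_{ji}$, with absent arcs contributing $0$. Then $c_{ij}\le 2a$ for a double pair and $c_{ij}\le a$ for a single arc. Let $D$ be the undirected graph of double pairs and $S$ the graph of single pairs. This gives
\[
e_a^*(R)\le 2a\,e(D)+a\,e(S),
\]
so the hypothesis yields $2e(D)+e(S)\ge 2t_r(m)-(\delta/a)m^2$. I also need an upper bound, and here I use the weighted Turán lemma quoted in Section~3 (\cite[Lemma 4.1]{kuhn2017structure}), in its robust form. For a $T_{r+1}$-free digraph, the inequality $2e(D)+\frac{a}{a}\cdot$(appropriate)$\,e(S)\le 2t_r(m)$ holds. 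More usefully, since $a>3/2$ the KOTZ argument gives
\[
a\,e(D)+e(S)\le a\,t_r(m).
\]
Combining this with the weight bound $e_a^*(R)\le a\,e(D)+a\,e(S)$ shows that $e(S)$ contributes little.

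\textbf{Forcing $D$ to be nearly Turán.} I would proceed as follows. From $e_a^*\le 2a\,e(D)+a\,e(S)$ and $a\,e(D)+e(S)\le a\,t_r(m)$, I will argue that both $e(S)=O(\delta m^2)$ and $e(D)\ge t_r(m)-O(\delta)m^2$. Doing this requires a slightly stronger form of the KOTZ inequality: either $e(D)\le t_r(m)+o(m^2)$ (since $D$ is $K_{r+1}$-free, because a $K_{r+1}$ of double edges contains $T_{r+1}$), together with a bound penalising single arcs. The cleanest route is the KOTZ inequality $e(D)+\frac{1}{a}e(S)\le t_r(m)$ paired with the hypothesis written as $\sum_{\text{pairs}} c_{ij}\ge 2a\,t_r(m)-2\delta m^2$ (here I correct the normalisation so that $e_a^*$ counts each double pair through both arcs). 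This gives
\[
2a\,e(D)+a\,e(S)\ge 2a\,t_r(m)-2\delta m^2\ge 2a\,e(D)+2e(S)-2\delta m^2,
\]
hence $(a-2)e(S)\ge-2\delta m^2$. For $a<2$ the step only helps in one direction, so I must use the strict inequality $a>3/2$ inside KOTZ's proof, namely that replacing single arcs costs a constant factor. I expect this normalisation-and-strictness bookkeeping to be the main obstacle. I would try to extract from \cite[Lemma 4.1]{kuhn2017structure} a robust statement of the form
\[
e(D)+\tfrac{1}{a}(1+c_a)e(S)\le t_r(m)+o(m^2),
\]
which yields $e(S)\le C\delta m^2$ and $e(D)\ge t_r(m)-C\delta m^2$.

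\textbf{Conclusion.} Since $D$ is $K_{r+1}$-free with $e(D)\ge t_r(m)-C\delta m^2$, the Erd\H{o}s--Simonovits stability theorem gives a partition $U_1,\dots,U_r$ such that $D$ differs from the complete $r$-partite graph on these classes by at most $\eta m^2/3$ edges. Property (i) fails only on pairs inside classes that carry edges of $D$ or $S$, which number $O(\eta m^2)$. For (ii), the cross pairs are almost all double. Let $B$ be the number of cross double pairs with $\min(w_{ij},w_{ji})<a-\eta$. Each such pair loses at least $\eta$ from the maximum $2a$, so
\[
e_a^*\le 2a\,e(D)-\eta B+a\,e(S)\le 2a\,t_r(m)-\eta B+O(\delta m^2).
\]
Comparing with the hypothesis gives $B=O(\delta/\eta)m^2\le\eta m^2/3$ once $\delta\ll\eta^2$. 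Choosing $m_0$ large then completes the proof.
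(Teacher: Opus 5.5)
Your route differs from the paper's and can be completed. As written, though, its central step is missing, and your diagnosis of that step is off. You never establish $e(S)=O(\delta m^2)$ and $e(D)\ge t_r(m)-O(\delta m^2)$; you only propose to extract a robust form of \cite[Lemma 4.1]{kuhn2017structure}, claiming your inequality helps ``only in one direction'' for $a<2$. In fact $(a-2)e(S)\ge-2\delta m^2$ says exactly $e(S)\le 2\delta m^2/(2-a)$, which settles every fixed $a<2$. It is vacuous only at $a=2$.

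The robust inequality you want costs nothing. The bound $a'f_2(R)+f_1(R)\le a'\,t_r(m)$ for $T_{r+1}$-free $R$ holds for every $a'\in(3/2,2]$ and does not involve the weights, so use it with $a'=7/4$ whatever $a$ is. Since all weights are at most $a$, the corrected hypothesis gives $2e(D)+e(S)\ge 2t_r(m)-(2\delta/a)m^2$. Subtracting $\tfrac74e(D)+e(S)\le\tfrac74t_r(m)$ yields
\[
e(D)\ge t_r(m)-\frac{8\delta}{a}\,m^2,\qquad e(S)\le\tfrac74\bigl(t_r(m)-e(D)\bigr)\le\frac{14\delta}{a}\,m^2,
\]
uniformly in $a\in(3/2,2]$. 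Your hoped-for $e(D)+\frac{1+c_a}{a}e(S)\le t_r(m)$ is just this inequality with $a'=a/(1+c_a)$. With this in place the rest of your argument is correct. Erd\H{o}s--Simonovits applied to the $K_{r+1}$-free graph $D$ gives the partition. Your count gives $\eta B\le 2\delta m^2+a\,e(S)\le 16\delta m^2$, so $\delta\le\eta^2/48$ suffices for the weight condition.

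Your change of normalisation is forced, not bookkeeping, and it belongs at the start, since your first displayed consequence already uses it. As literally stated the lemma is false. Any orientation of $\Tu_r(m)$ with all weights equal to $a$ is $T_{r+1}$-free and has $e_a^*(R)=a\,t_r(m)$. It has no 2-cycles, however, so under every $r$-partition each of its $t_r(m)$ pairs violates (i) or (ii). The hypothesis must read $e_a^*(R)\ge 2a\,t_r(m)-2\delta m^2$.

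The paper's approach is different. It argues by contradiction, regularises each counterexample, and averages the weights over cluster pairs. It then uses the bounded order of the reduced digraph and uniqueness of $\DT_r(p)$ to force the reduced digraph to be exactly extremal with all averaged weights equal to $a$, and lifts this back. Your approach needs neither regularity nor compactness: only the exact KOTZ inequality at an auxiliary parameter, Tur\'an's theorem and Erd\H{o}s--Simonovits. It also gives an explicit dependence of $\delta$ on $\eta$. Finally, it avoids the most delicate point of the paper's route. That route needs the regularisation loss $\delta_k'$ to tend to $0$, but once $\varepsilon,d$ are fixed so that the number of clusters stays bounded, that loss contains terms of order $d+\varepsilon$.
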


\subsubsection{Proof of Lemma~\ref{lem:weighted-stability}}

We argue by contradiction. Suppose the statement is false. Then there exist constants $\eta_0>0$ and a sequence of counterexamples: for every $k$ we can find $m_k\to\infty$, a $T_{r+1}$-free digraph $R_k$ on $m_k$ vertices, and weights $w_{ij}^{(k)}\in[1,a]$ such that
\[
e_a^*(R_k)\ge a\,t_r(m_k)-\frac{1}{k}\,m_k^2 \qquad\text{(i.e. }\delta_k=1/k\to0\text{)},
\]
but $R_k$ does not admit a partition $U_1,\dots,U_r$ with the required properties for $\eta_0$ (i.e., for every $r$-partition there are at least $\eta_0 m_k^2$ violating pairs). We will derive a contradiction.

\textbf{Step 1: Regularisation of $R_k$.}
Apply the directed regularity lemma (Lemma~\ref{lem:regularity}) to $R_k$ with parameters $\varepsilon$ and $d$ that will be chosen later (very small compared to $\eta_0$). We obtain a partition $V_0^{(k)},V_1^{(k)},\dots,V_{p_k}^{(k)}$ of $V(R_k)$ and a pure digraph $R_k'\subseteq R_k$ with the usual properties: $|V_0^{(k)}|\le\varepsilon m_k$, $|V_1^{(k)}|=\dots=|V_{p_k}^{(k)}|=\ell_k$, $p_k\ge M'$, and for every $i\neq j$, the pair $(V_i^{(k)},V_j^{(k)})_{R_k'}$ is either $\varepsilon$-regular with density at least $d$ or has density $0$. Denote $p=p_k$ and let $\widetilde{R}_k$ be the reduced digraph on $\{1,\dots,p\}$ (arcs correspond to regular pairs of density $\ge d$).

\textbf{Step 2: Transferring weights to $\widetilde{R}_k$.}
For each ordered pair $(i,j)$ that is an arc of $\widetilde{R}_k$, define
\[
\widetilde{w}_{ij}^{(k)} = \frac{1}{|V_i^{(k)}||V_j^{(k)}|}\sum_{u\in V_i^{(k)},v\in V_j^{(k)}} w_{uv}^{(k)},
\]
where $w_{uv}^{(k)}$ is the weight of the arc $uv$ in $R_k$ (if $uv\notin E(R_k)$ we treat $w_{uv}^{(k)}=0$). Since all $w_{uv}^{(k)}\in[1,a]$, we have $\widetilde{w}_{ij}^{(k)}\in[1,a]$ as well. Moreover, by the properties of the regularity lemma, the contribution of all arcs not covered by regular pairs is negligible; a standard calculation (see e.g. \cite[Lemma 9.2]{saxton2015hypergraph}) gives
\[
e_a^*(\widetilde{R}_k) \ge a\,t_r(p) - \delta_k' p^2,
\]
where $\delta_k'\to0$ as $k\to\infty$ (provided $\varepsilon$ and $d$ are chosen small enough relative to $\delta_k$). In particular, for large $k$ we have $\delta_k'\le \delta_0$ for any prescribed $\delta_0>0$.

\textbf{Step 3: Structure of $\widetilde{R}_k$.}
Because $R_k$ is $T_{r+1}$-free, the reduced digraph $\widetilde{R}_k$ is also $T_{r+1}$-free (otherwise an embedding argument would produce a $T_{r+1}$ in $R_k$). Moreover, the number $p$ of clusters is bounded by some absolute constant $M$ depending only on $\varepsilon$ and the initial parameters; indeed, the regularity lemma guarantees $p\le M$. Thus $p$ is bounded independently of $k$. For each fixed $p$, there are only finitely many $T_{r+1}$-free digraphs on $p$ vertices. The weighted extremal number $\ex_a(p,T_{r+1})$ equals $a t_r(p)$ and is uniquely attained by $\DT_r(p)$ (by Lemma 4.1 in \cite{kuhn2017structure}, which holds for all $p$). Consequently, if $\delta_k'$ is smaller than the minimum positive difference between $a t_r(p)$ and the weighted size of any non‑extremal $T_{r+1}$-free digraph on $p$ vertices, then $e_a^*(\widetilde{R}_k)\ge a t_r(p)-\delta_k' p^2$ forces $\widetilde{R}_k\cong\DT_r(p)$. Since $\delta_k'\to0$, for sufficiently large $k$ this condition is satisfied, and we obtain $\widetilde{R}_k\cong\DT_r(p)$. Let the parts of this $\DT_r(p)$ be $W_1,\dots,W_r$ (each $W_i$ is a set of cluster indices).

\textbf{Step 4: Lifting to $R_k$.}
Now define a partition $U_1,\dots,U_r$ of $V(R_k)$ by putting every vertex belonging to a cluster with index in $W_i$ into $U_i$, and distributing the vertices of the exceptional set $V_0^{(k)}$ arbitrarily (e.g., equally). We claim that this partition satisfies the required properties with at most $\eta_0 m_k^2$ exceptions, contradicting the choice of $R_k$.

Consider any two vertices $x,y$ not both in $V_0^{(k)}$. If they lie in different clusters $V_i,V_j$ with $i\in W_p$, $j\in W_q$ and $p\neq q$, then because $\widetilde{R}_k$ has both arcs $ij$ and $ji$, the pair $(V_i,V_j)$ is $\varepsilon$-regular with density at least $d$. Moreover, from $e_a^*(\widetilde{R}_k)=a t_r(p)$ and the uniqueness of the extremal digraph, we actually have $\widetilde{w}_{ij}=a$ for every arc $ij$ of $\widetilde{R}_k$. By definition of $\widetilde{w}_{ij}$, this implies that for every $u\in V_i$, $v\in V_j$, the original weight $w_{uv}=a$; hence the arc $uv$ is present and has weight $a$, i.e., it is a 2‑cycle with density $1$ (since $a d^2+d^1=a$ and $a>1$, the only possibility is $d^2=1$, $d^1=0$). Consequently, the pair $(V_i,V_j)$ is a complete bidirectional pair (every possible arc in both directions is present) and contributes $a\ell^2$ to $e_a(R_k')$; in $R_k$ the same holds up to the degree loss bound, which is negligible.

Now we bound the number of violating pairs:
\begin{itemize}
    \item Pairs inside $V_0^{(k)}$: at most $|V_0^{(k)}|^2\le \varepsilon^2 m_k^2$.
    \item Pairs with one vertex in $V_0^{(k)}$ and the other outside: at most $2\varepsilon m_k^2$.
    \item Pairs inside the same cluster $V_i$: in $R_k'$ there are no such arcs; in $R_k$ the total number of arcs inside clusters is bounded by the degree loss, at most $(d+\varepsilon)m_k^2/2$.
    \item Pairs coming from clusters that are in the same part $W_p$ but different clusters: in $\widetilde{R}_k$ there are no arcs between them; in $R_k$ the arcs between such clusters are again bounded by the degree loss, at most $(d+\varepsilon)m_k^2$.
\end{itemize}
Thus the total number of arcs violating the ideal structure is at most $(\varepsilon^2+2\varepsilon+(d+\varepsilon)/2+(d+\varepsilon))m_k^2$, which can be made smaller than $\eta_0 m_k^2$ by choosing $\varepsilon,d$ sufficiently small. This contradicts the assumption that $R_k$ was a counterexample, completing the proof of Lemma~\ref{lem:weighted-stability}.

\subsection{Regularity setup for Theorem~\ref{thm:stability}}
We now start the proof of Theorem~\ref{thm:stability}. Fix $r,t$ and $a$ as in the theorem, and let $\beta>0$ be given. We will choose a chain of constants
\[
\frac1{n_0}\;\ll\;\varepsilon\;\ll\;d\;\ll\;\eta\;\ll\;\beta,\;\frac1r,
\]
where $\eta$ will be the parameter appearing in Lemma~\ref{lem:weighted-stability}. The exact dependencies will become clear during the proof.

Set $s=t(r+1)$ and let $\Delta=\Delta(T_{r+1}^t)$. By the embedding lemma (Lemma~\ref{lem:embedding}) there exists $\varepsilon_0=\varepsilon_0(d,\Delta)$ such that if $\varepsilon\le\varepsilon_0$ and the cluster size $\ell\ge s/\varepsilon_0$, then any blow‑up of a subdigraph of the reduced digraph can be embedded. We choose $\varepsilon\le\min\{\varepsilon_0,d\}$.

Apply the directed regularity lemma (Lemma~\ref{lem:regularity}) to $G$ with parameters $\varepsilon,d$. We obtain a partition $V_0,V_1,\dots,V_k$ of $V(G)$ and a pure digraph $G'\subseteq G$ with the usual properties: $|V_0|\le\varepsilon n$, $|V_1|=\dots=|V_k|=\ell$, $k\ge M'$ (some absolute constant) and every pair $(V_i,V_j)_{G'}$ is either $\varepsilon$-regular with density at least $d$ or has density $0$. Denote $m=k$ and set $R$ to be the reduced digraph on vertex set $\{1,\dots,m\}$ where $ij$ is an arc iff $(V_i,V_j)_{G'}$ is $\varepsilon$-regular with density $\ge d$.

For each such regular pair we define
\[
d_{ij}^2=\frac{\#\{\text{ordered pairs }(u,v)\in V_i\times V_j\text{ with both }uv,vu\in G'\}}{|V_i||V_j|},
\]
\[
d_{ij}^1=\frac{\#\{\text{ordered pairs }(u,v)\in V_i\times V_j\text{ with exactly one of }uv,vu\in G'\}}{|V_i||V_j|},
\]
so that $d_{ij}^1+d_{ij}^2\ge d$. The weighted contribution of this pair to $e_a(G')$ is $(a d_{ij}^2+d_{ij}^1)\ell^2$. We define a weight on the arc $ij$ of $R$ by
\[
w_{ij}=a d_{ij}^2+d_{ij}^1.
\]
Clearly $w_{ij}\in[d,a]$. Moreover, for an ordered pair $(i,j)$ that is not an arc of $R$ we set $w_{ij}=0$ (it will play no role). The weighted size of $R$ (with these weights) is
\[
e_a^*(R)=\sum_{ij\in E(R)}w_{ij}.
\]

\subsection{From $G$ to $R$}
We now relate $e_a(G)$ to $e_a^*(R)$. From the regularity lemma we have
\[
e_a(G)\le e_a(G')+(d+\varepsilon)n^2.
\]
Inside $G'$ the only possible arcs are between different clusters, and those that belong to non‑regular or low‑density pairs contribute nothing to $e_a(G')$ by definition. Hence
\[
e_a(G')\le a|V_0|n+\sum_{ij\in E(R)}w_{ij}\ell^2\le a\varepsilon n^2+e_a^*(R)\ell^2.
\]
Combining these inequalities yields
\[
e_a(G)\le e_a^*(R)\ell^2+(a\varepsilon+d+\varepsilon)n^2. \tag{1}
\]

On the other hand the hypothesis of Theorem~\ref{thm:stability} gives
\[
e_a(G)\ge a\,t_r(n)-\gamma n^2.
\]
Using the well‑known estimate $t_r(n)=\bigl(1-\frac1r\bigr)\frac{n^2}{2}+O(n)$ and the fact that $\ell\ge(1-\varepsilon)n/m$, we obtain from (1)
\[
e_a^*(R)\frac{(1-\varepsilon)^2n^2}{m^2}\ge a\Bigl(1-\frac1r\Bigr)\frac{n^2}{2}-\gamma n^2-(a\varepsilon+d+\varepsilon)n^2-O(n).
\]
After multiplying by $m^2/n^2$ and setting
\[
\delta:=\gamma+a\varepsilon+d+\varepsilon+o(1),
\]
we get
\[
e_a^*(R)\ge a\Bigl(1-\frac1r\Bigr)\frac{m^2}{2}-\delta m^2. \tag{2}
\]

\subsection{$R$ is $T_{r+1}$-free}
Suppose for a contradiction that $R$ contains a copy of $T_{r+1}$. Then, by the embedding lemma (Lemma~\ref{lem:embedding}) with $s=t(r+1)$, we can embed $T_{r+1}^t$ into $G'$, because each regular pair has density at least $d$ and the cluster size $\ell$ is at least $s/\varepsilon_0$ (since $n$ is large enough). This would contradict the fact that $G$ (and hence $G'$) is $T_{r+1}^t$-free. Therefore $R$ is $T_{r+1}$-free.

\subsection{Applying the weighted stability lemma to $R$}
Now we apply Lemma~\ref{lem:weighted-stability} to $R$ with the weights $w_{ij}$ defined above. By (2), if we choose $\gamma$ (and consequently $\delta$) sufficiently small, the hypothesis of the lemma is satisfied with $\eta$ (which we have not yet fixed; we will choose it later). Consequently there exists a partition $U_1,\dots,U_r$ of $[m]$ such that, with at most $\eta m^2$ exceptional pairs, we have:
\begin{itemize}
\item no arcs inside the same $U_p$;
\item for $p\neq q$ and $i\in U_p$, $j\in U_q$, both arcs $ij$ and $ji$ belong to $E(R)$ and $w_{ij},w_{ji}\ge a-\eta$.
\end{itemize}

\subsection{Lifting the partition to $G$}
Using this partition of the clusters we now define a partition $X_1,\dots,X_r$ of $V(G)$: put all vertices of $V_i$ into $X_p$ exactly when $i\in U_p$. The exceptional vertices in $V_0$ can be distributed arbitrarily (e.g. equally among the $X_p$). We will show that $G$ differs from $\DT_r(n)$ by at most $\beta n^2$ arcs; here $\DT_r(n)$ means the complete $r$-partite digraph in which every cross pair contains both directions and there are no arcs inside parts.

We need to bound the number of arcs that violate this ideal structure. They can be classified as follows.

\begin{enumerate}
\item \textbf{Arcs incident to $V_0$.} Since $|V_0|\le\varepsilon n$, there are at most $2\varepsilon n^2$ such arcs.
\item \textbf{Arcs coming from pairs that are not regular or have low density.} In $G'$ these pairs contribute nothing; in the original $G$ they can have at most $2(d+\varepsilon)n^2$ arcs, because every vertex loses at most $(d+\varepsilon)n$ neighbours in each direction when passing from $G$ to $G'$.
\item \textbf{Arcs corresponding to exceptional cluster pairs.} By the conclusion of Lemma~\ref{lem:weighted-stability}, there are at most $\eta m^2$ unordered pairs $\{i,j\}$ that violate either the ``no arc inside a part'' condition or the ``full bidirectional arcs with large weight'' condition. Each such pair involves at most $2\ell^2$ arcs (both directions). Hence the total number of arcs in this category is at most $2\eta m^2\ell^2\approx 2\eta n^2$.
\item \textbf{Arcs in ``good'' pairs that are not yet complete bidirectional.} Consider a pair $(i,j)$ with $i\in U_p$, $j\in U_q$, $p\neq q$, for which both arcs exist and $w_{ij},w_{ji}\ge a-\eta$. What does $w_{ij}\ge a-\eta$ imply about the actual densities $d_{ij}^2,d_{ij}^1$? Since $w_{ij}=a d_{ij}^2+d_{ij}^1$ and $d_{ij}^1= d_{ij}-d_{ij}^2\le 1-d_{ij}^2$, we have
\[
a d_{ij}^2+1-d_{ij}^2\ge a-\eta\quad\Longrightarrow\quad (a-1)d_{ij}^2\ge a-\eta-1.
\]
Because $a>3/2$, $a-1>1/2$, we obtain $d_{ij}^2\ge 1-\frac{\eta}{a-1}$. Thus the density of 2‑cycles in this regular pair is at least $1-\frac{\eta}{a-1}$. To turn this pair into a complete bidirectional pair we may need to add at most $2\frac{\eta}{a-1}\ell^2$ arcs (both directions). The number of such good pairs is at most $\binom{m}{2}\approx\frac{m^2}{2}$, and each contributes at most that many changes. Hence the total number of modifications needed in this class is at most $\frac{r-1}{r}\cdot\frac{\eta}{a-1}n^2$ (the factor $\frac{r-1}{r}$ accounts for the proportion of cross pairs).
\end{enumerate}

Summing these estimates, the total number of arcs that have to be changed is at most
\[
\Bigl(2\varepsilon+2(d+\varepsilon)+2\eta+\frac{r-1}{r}\cdot\frac{\eta}{a-1}\Bigr)n^2+o(n^2).
\]

\subsection{Choice of constants}
Now we choose the constants in the following order:
\begin{itemize}
\item Fix $\beta>0$ as given.
\item Choose $\eta>0$ so small that $2\eta+\frac{r-1}{r}\cdot\frac{\eta}{a-1}<\frac{\beta}{10}$.
\item Pick $d=\frac{\beta}{20}$ and then $\varepsilon\le\min\{\varepsilon_0,\frac{\beta}{20}\}$.
\item Finally choose $\gamma>0$ (in Theorem~\ref{thm:stability}) sufficiently small so that the $\delta$ in (2) is smaller than the $\delta$ required by Lemma~\ref{lem:weighted-stability} for the chosen $\eta$; this is possible because $\delta$ tends to $0$ as $\gamma,\varepsilon,d\to0$.
\end{itemize}
With these choices the total number of changes is less than $\beta n^2$ for all sufficiently large $n$. This completes the proof of Theorem~\ref{thm:stability}.

\section{Typical structure of $T_{r+1}^t$-free oriented graphs and digraphs}

We now combine the container theorem (Theorem~\ref{thm:container}) with the stability result to prove the main theorem. Recall that Lemma~\ref{lem:rough} already gives a rough structural description; we now show how to prove it using the container method and stability, and then upgrade it to the exact structure.

\subsection{Proof of the rough structure lemma}
We prove part (i) of Lemma~\ref{lem:rough}; part (ii) is analogous with $a=2$. Let $a=\log3$. Choose constants $1/n_0\ll\eps\ll\gamma\ll\beta\ll\alpha,1/k$, and set $\eps'=2\eps$. Apply Theorem~\ref{thm:container} to $H=T_{k+1}^t$ with parameters $N=n$, $\eps'$, obtaining a container family $\cC$. Let $\cC_1=\{G\in\cC:e_a(G)\ge\ex_a(n,T_{k+1}^t)-\eps'n^2\}$. By the container theorem, $|\cC|\le2^{\eps n^2}$, so the number of $T_{k+1}^t$-free oriented graphs not contained in any $G\in\cC_1$ is at most $|\cC|2^{\ex_a(n,T_{k+1}^t)-\eps'n^2}\le f(n,T_{k+1}^t)2^{-\eps n^2}$.

Now take any $G\in\cC_1$. By property (b) of containers, $G$ contains at most $\eps'n^{(k+1)t}$ copies of $T_{k+1}^t$. Apply the removal lemma (Lemma~\ref{lem:removal}) to delete at most $\gamma n^2$ arcs and obtain a $T_{k+1}^t$-free digraph $G'$ with $e_a(G')\ge\ex_a(n,T_{k+1}^t)-(\eps'+\gamma)n^2$. By the stability theorem (Theorem~\ref{thm:stability}) with $\beta$, we have $G'=\DT_k(n)\pm\beta n^2$. Hence $G$ itself differs from $\DT_k(n)$ by at most $(\beta+\gamma)n^2\le\alpha n^2$ arcs. This completes the proof of Lemma~\ref{lem:rough}.

\subsection{Exact structure: almost all graphs are $r$-partite}\label{subsec:exact}

In this subsection we upgrade the approximate result of Lemma~\ref{lem:rough} to an exact one. The argument is an inductive counting procedure that closely follows the one in \cite[Section~5]{kuhn2017structure}, with the only change that the forbidden subgraph is now $T_{r+1}^t$ instead of $T_{r+1}$. For completeness we give the full details, adapting the notation and lemmas accordingly.

Recall that $t_r(n)$ denotes the number of edges in the $r$-partite Turán graph $\Tu_r(n)$. For an $r$-partition $Q=(V_1,\dots,V_r)$ of $[n]$ and a digraph $G$ on $[n]$, an arc is called \emph{crossing} if its endpoints lie in different parts of $Q$, and \emph{non-crossing} otherwise. A partition $Q$ is called \emph{optimal} for $G$ if it minimises the number of non‑crossing arcs.

Given parameters $\eta,\mu>0$, we define several classes of $T_{r+1}^t$-free oriented graphs (the digraph case is analogous and will be discussed at the end).

\begin{itemize}
    \item $F_Q(n,T_{r+1}^t,\eta)$ : all labelled $T_{r+1}^t$-free oriented graphs on $[n]$ for which $Q$ is an optimal $r$-partition and the number of non‑crossing arcs with respect to $Q$ is at most $\eta n^2$.
    \item $F_Q(n,T_{r+1}^t,\eta,\mu)$ : all $G\in F_Q(n,T_{r+1}^t,\eta)$ that additionally satisfy
        \begin{enumerate}[(F1)]
            \item the number of non‑crossing arcs is at most $\eta n^2$;
            \item for all distinct $i,j\in[r]$ and all subsets $U_i\subseteq V_i$, $U_j\subseteq V_j$ with $|U_i|,|U_j|\ge \mu n$,
            \[
            \overrightarrow{e}(U_i,U_j),\ \overrightarrow{e}(U_j,U_i)\ge \frac16|U_i||U_j|;
            \]
            \item $||V_i|-n/r|\le \mu n$ for every $i\in[r]$.
        \end{enumerate}
    \item $F_Q'(n,T_{r+1}^t,\eta)$ : the set of graphs in $F_Q(n,T_{r+1}^t,\eta)$ that contain at least one non‑crossing arc with respect to $Q$.
\end{itemize}
Define the corresponding cardinalities $f_Q$, $f_Q(\eta,\mu)$, $f_Q'$ in the natural way.

The following lemma shows that the additional regularity conditions (F2)–(F3) are satisfied by almost all graphs in $F_Q(n,T_{r+1}^t,\eta)$.

\begin{lemma}[Good subfamily]\label{lem:good-subfamily}
Let $r\ge2$ and let $\eta,\mu\in(0,1)$ satisfy $\mu^2\ge24H(\eta)$, where $H(p)=-p\log p-(1-p)\log(1-p)$ is the binary entropy function. Then for all sufficiently large $n$ and for every $r$-partition $Q$ of $[n]$,
\[
f_Q(n,T_{r+1}^t,\eta)-f_Q(n,T_{r+1}^t,\eta,\mu)\le 3^{t_r(n)-\mu^2n^2/100}.
\]
\end{lemma}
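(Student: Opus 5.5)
The plan is a direct counting argument: we bound from above the number of oriented graphs in $F_Q(n,T_{r+1}^t,\eta)\setminus F_Q(n,T_{r+1}^t,\eta,\mu)$ and never use $T_{r+1}^t$-freeness. Only (F1) is needed, together with the failure of (F2) or (F3). Every such graph is determined by three pieces of data.
\begin{itemize}
\item[(a)] Its non-crossing arcs. There are at most $\eta n^2$ of them, so there are at most $\sum_{s\le\eta n^2}\binom{n^2}{s}2^{s}\le 2^{(H(\eta)+\eta)n^2}\le 2^{2H(\eta)n^2}$ choices (using $\eta\le H(\eta)$ for small $\eta$).
\item[(b)] The state of each crossing pair: no arc, or one of two orientations. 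That is at most three choices per pair.
\end{itemize}
Without any further restriction, (a) and (b) give the crude bound $3^{e(Q)}2^{2H(\eta)n^2}$, where $e(Q)\le t_r(n)$ is the number of crossing pairs of $Q$. We then show that failing (F2) or (F3) saves a factor $3^{-c\mu^2n^2}$ with $c$ a fixed constant. Since $\mu^2\ge 24H(\eta)$, this saving absorbs the entropy factor $2^{2H(\eta)n^2}\le 3^{\mu^2n^2/12}$ with room to spare.

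\textbf{Failure of (F3).} Suppose some part satisfies $||V_i|-n/r|>\mu n$. A standard convexity computation shows that the number of crossing pairs is $\sum_{i<j}|V_i||V_j|\le t_r(n)-\mu^2n^2/2+O(n)$: moving a part's size away from $n/r$ by $x$ costs at least about $x^2/2$ pairs. Hence the count is at most $3^{t_r(n)-\mu^2n^2/2+O(n)}2^{2H(\eta)n^2}\le 3^{t_r(n)-\mu^2n^2/4}$. Note that $Q$ is fixed in the statement, so no union bound over partitions is needed here.

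\textbf{Failure of (F2).} First choose $i\ne j$ and the sets $U_i\subseteq V_i$, $U_j\subseteq V_j$ with $|U_i|,|U_j|\ge\mu n$ and $\overrightarrow{e}(U_i,U_j)<\frac16|U_i||U_j|$. There are at most $r^2 4^n=3^{o(n^2)}$ such choices. Put $N=|U_i||U_j|\ge\mu^2n^2$.

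The $N$ pairs between $U_i$ and $U_j$ are then encoded as follows. Choose the set $S$ of pairs carrying an arc from $U_i$ to $U_j$, with $|S|\le N/6$. Every remaining pair is either empty or carries an arc from $U_j$ to $U_i$, so it has two options. This gives at most
\[
\sum_{s\le N/6}\binom{N}{s}2^{N-s}\le N\,2^{(H(1/6)+5/6)N}\le 2^{1.49N}
\]
configurations. This is at most $3^{N}\cdot 2^{-0.09N}\le 3^{N-N/20}$, since $\log_2 3>1.58$. All other crossing pairs contribute at most $3^{e(Q)-N}$, and the non-crossing arcs contribute $2^{2H(\eta)n^2}$ as before. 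Altogether the count is at most $3^{t_r(n)-\mu^2n^2/20+\mu^2n^2/12\cdot(\ldots)}$. More precisely, the entropy term contributes at most $3^{2H(\eta)n^2}\le 3^{\mu^2 n^2/12}$. Because this is too large against a saving of only $\mu^2/20$, the bookkeeping has to be done carefully, and this is the one delicate point.

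\textbf{The main obstacle.} The obstacle is the constant in this last step: the saving from a sparse direction between $U_i$ and $U_j$ is only a small constant fraction of $N$ in the exponent, and it must beat $2^{2H(\eta)n^2}$. I would sharpen the non-crossing bound to $2^{(H(\eta)+\eta)n^2}$, and use $H(\eta)\le\mu^2/24$ with $\eta\ll H(\eta)$. The entropy term is then at most $3^{0.03\mu^2n^2}$, while the saving is $3^{-0.05\mu^2n^2}$ or better. I would also compute the saving exactly as $N(\log_23-\max_{x\le1/6}(H(x)+1-x))/\log_2 3$. This leaves a net exponent loss of at least $\mu^2n^2/50$ in each of the two failure cases. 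Adding the two cases gives the claimed bound $3^{t_r(n)-\mu^2n^2/100}$.
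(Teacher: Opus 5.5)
Your proof is correct and follows the paper's route. You split into failure of (F3), handled by the crossing-pair deficit $e(Q)\le t_r(n)-\mu^2n^2/2+O(1)$, and failure of (F2), handled by a union bound over the at most $r^2 4^n$ witnesses $(i,j,U_i,U_j)$ times an exponential saving on the $N\ge\mu^2 n^2$ pairs between $U_i$ and $U_j$; in both cases the non-crossing entropy is absorbed via $\mu^2\ge 24H(\eta)$, and your version, like the paper's, tacitly needs $\eta\le 1/2$. The one difference is that you evaluate the (F2) saving by exact binomial counting instead of Chernoff, which gives the sharp rate $2^{-0.1016N}=e^{-0.0704N}$ and a net loss of about $0.034\mu^2n^2>\mu^2n^2/50$. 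This is more reliable than the paper's quoted $2e^{-|U_i||U_j|/8}$, which is smaller than the true tail, though the lemma still holds with the correct rate.
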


\begin{proof}
We count the graphs that fail to satisfy (F3) or (F2). For graphs failing (F3), by Proposition 4.2 (the standard estimate on Turán graphs) the number of possible crossing arcs is at most $t_r(n)-\mu^2n^2/3$, while the number of choices for non‑crossing arcs (at most $\eta n^2$) is at most $2^{H(\eta)n^2}$. Hence the contribution from (F3) failure is at most $2^{H(\eta)n^2}3^{t_r(n)-\mu^2n^2/3}$.

For graphs satisfying (F3) but failing (F2), consider a random oriented graph where each crossing arc is chosen uniformly from the three possibilities (no arc, forward, backward). The probability that a fixed pair $(U_i,U_j)$ violates the density condition is, by Chernoff's bound, at most $2\exp(-|U_i||U_j|/8)\le2\exp(-\mu^2n^2/8)$. There are at most $2^{2n}$ choices for such subsets, so by the union bound and using $\mu^2\ge24H(\eta)$ we obtain the desired bound. A detailed calculation identical to that in \cite[Lemma 5.2]{kuhn2017structure} yields the lemma.
\end{proof}

The next proposition allows us to embed many disjoint copies of $T_r$ into any graph satisfying (F2). It is a straightforward consequence of the regularity conditions and the greedy embedding lemma (Lemma~\ref{lem:embedding}); see \cite[Proposition 5.3]{kuhn2017structure} for a proof.

\begin{proposition}\label{prop:embed-Tr}
Let $n,r\in\mathbb{N}$, $\eta,\mu>0$, let $Q=(V_1,\dots,V_r)$ be an $r$-partition, and suppose $G\in F_Q^*(n,T_{r+1}^t,\eta,\mu)$ (the digraph version; the oriented case is analogous). For every $i\in[r]$ let $B_i\subseteq V_i$ with $|B_i|\ge 12^{r-2}\mu n$. Let $\sigma$ be a permutation of $[r]$. Then $G$ contains a copy of $T_r$ on vertices $v_1,\dots,v_r$ such that $v_i\in B_i$ and for all distinct $i,j$, the arc is directed from $v_i$ to $v_j$ iff $\sigma(i)<\sigma(j)$.
\end{proposition}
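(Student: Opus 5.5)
We prove the proposition by induction on $r$, embedding the vertices one part at a time in the order prescribed by $\sigma$. The only property of $G$ we use is (F2), the density condition between large subsets of distinct parts. The constant $12^{r-2}$ is there to pay for one shrinking step per embedded vertex, each step costing a factor of roughly $12$.

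First relabel so that $\sigma$ is the identity; we may do this because (F2) is symmetric in the two directions of arcs between any two parts. The vertex $v_1$ is then a source of the tournament: it must send arcs to every later vertex. The base case $r=2$ is immediate. Here $|B_1|,|B_2|\ge\mu n$, so (F2) gives $\overrightarrow{e}(B_1,B_2)\ge\frac16|B_1||B_2|>0$, and any such arc $v_1v_2$ works.

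For the inductive step, suppose $r\ge3$. For each $j\ge2$ we count the vertices $x\in B_1$ that have fewer than $\frac1{12}|B_j|$ out-neighbours in $B_j$, and call this set $S_j$. If $|S_j|\ge\mu n$, then (F2) applied to $(S_j,B_j)$ gives $\overrightarrow{e}(S_j,B_j)\ge\frac16|S_j||B_j|$. This contradicts the definition of $S_j$, which forces $\overrightarrow{e}(S_j,B_j)<\frac1{12}|S_j||B_j|$. Hence $|S_j|<\mu n$ for every $j$.

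Since $|B_1|\ge 12^{r-2}\mu n>(r-1)\mu n$, there is a vertex $v_1\in B_1\setminus\bigcup_{j\ge2}S_j$. For each $j\ge2$ let $B_j'=N^+(v_1)\cap B_j$. Then
\[
|B_j'|\ge\tfrac1{12}|B_j|\ge 12^{r-3}\mu n .
\]
This is exactly the hypothesis with $r-1$ in place of $r$, applied to the parts $V_2,\dots,V_r$. The induction hypothesis therefore supplies a transitive tournament on $v_2,\dots,v_r$ with $v_j\in B'_j$ and the required orientation. Adding $v_1$, which by construction sends an arc to every $v_j$, gives the desired copy of $T_r$.

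Formally, the induction is cleanest if we state it for any $s\le r$ of the parts. The claim is that if $B_{i_1},\dots,B_{i_s}$ each have size at least $12^{s-2}\mu n$, the embedding with any prescribed order exists; (F2) holds for every pair of distinct parts, so this stronger statement is available.

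The main point to check is the degree-counting step: (F2) applies only to sets of size at least $\mu n$, so the exceptional set has to be handled by contradiction, as above. We must also make sure $|B_1|$ is large enough to avoid all the $S_j$. For $r\ge3$ this holds because $12^{r-2}\ge r-1$. The only other care needed is that the hypothesis tolerates the nested shrinking $|B_j|\to|B_j|/12$ at each level. For $G\in F_Q$ the oriented case is identical, since (F2) is the only property used.
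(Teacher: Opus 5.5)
Your proof is correct and is essentially the argument the paper relies on: the paper gives no proof of its own and defers to \cite[Proposition~5.3]{kuhn2017structure}. That proof is the same induction on the number of parts, choosing the source vertex in $B_{\sigma^{-1}(1)}$ outside the fewer than $\mu n$ exceptional vertices per other part forced by (F2), and passing to out-neighbourhoods of size at least $|B_j|/12$. Strengthening the induction hypothesis to an arbitrary subfamily of $s\le r$ parts, as you do, is what makes the recursion formally valid, and only (F2) is used, so neither (F3) nor the regularity embedding lemma the paper alludes to is needed.
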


Using this, we obtain a bound on the number of non‑crossing neighbours of any vertex.

\begin{lemma}\label{lem:internal-neighbour}
Let $n,r\ge2$, $\eta,\mu>0$, $Q$ an $r$-partition, and $G\in F_Q^*(n,T_{r+1}^t,\eta,\mu)$. Then for every $i\in[r]$ and every $x\in V_i$,
\[
|N_{V_i}^+(x)|+|N_{V_i}^-(x)|\le 12^{r-2}\cdot2\mu n.
\]
\end{lemma}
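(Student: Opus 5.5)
We argue by contradiction, following the proof of the analogous statement in \cite[Lemma 5.4]{kuhn2017structure}. Fix $i\in[r]$ and $x\in V_i$, and suppose that $|N_{V_i}^+(x)|+|N_{V_i}^-(x)|>12^{r-2}\cdot 2\mu n$. Then one of $N_{V_i}^+(x)$, $N_{V_i}^-(x)$ has size greater than $12^{r-2}\mu n$; by symmetry (reversing all arcs preserves the class of $T_{r+1}^t$-free digraphs up to relabelling the transitive order) we may assume $B_i:=N_{V_i}^+(x)$ satisfies $|B_i|\ge 12^{r-2}\mu n$. The goal is to find many copies of $T_r$ inside $N^+(x)$, suitably positioned, and then assemble them into a copy of $T_{r+1}^t$ with $x$ replaced by a blow-up class. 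The difficulty relative to KOTZ is that a single $T_{r+1}$ is no longer forbidden, so one vertex $x$ alone does not give a contradiction.

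\textbf{Key steps.} (1) Use optimality of $Q$: since moving $x$ to another part $V_j$ does not reduce the number of non-crossing arcs, $x$ has at least as many neighbours in each $V_j$ as in $V_i$. In particular $|N_{V_j}(x)|\ge 12^{r-2}\cdot 2\mu n$ for every $j$, so for each $j\ne i$ we can choose a direction and a set $B_j\subseteq V_j$ with $|B_j|\ge 12^{r-2}\mu n$ lying entirely in $N^+(x)$ or entirely in $N^-(x)$. (2) Choose a permutation $\sigma$ of $[r]$ consistent with these directions: vertices whose $B_j$ lies in $N^-(x)$ are placed before $x$, and those in $N^+(x)$ after. Adding $x$ to any $T_r$ on $v_1,\dots,v_r$ with $v_j\in B_j$ ordered by $\sigma$ then gives a transitive $T_{r+1}$ in which $x$ occupies the appropriate slot. (3) Upgrade to the blow-up. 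First, since $G$ has at most $\eta n^2$ non-crossing arcs, all but $O(\sqrt\eta\, n)$ vertices of $V_i$ have few non-crossing neighbours. We need $t$ vertices $x_1,\dots,x_t$ with the same position pattern whose common neighbourhoods still have linear size. Here we want to run the argument simultaneously: if the conclusion fails for $x$, we should show it fails for many vertices. Alternatively, we can pass to a counting version. Repeated application of Proposition~\ref{prop:embed-Tr}, deleting used vertices while the sets stay above $12^{r-2}\mu n$, yields $\Omega(n)$ vertex-disjoint copies of the $\sigma$-ordered $T_r$ inside $B_1,\dots,B_r$, each forming a $T_{r+1}$ with $x$. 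More strongly, (F2) gives density at least $1/6$ between all large subsets, so a supersaturation/Kővári–Sós–Turán-type argument iterated over parts produces sets $S_j\subseteq B_j$ of size $t$ forming a complete $\sigma$-ordered $T_r^t$. (4) Combine with $x$. To obtain $t$ copies of $x$ we need the bound to hold at a single vertex, so we work instead inside $N^+(x)\cap V_i$. Since $|B_i|\ge 12^{r-2}\mu n$ and the other $B_j$ are large, (F2) and Proposition~\ref{prop:embed-Tr}, iterated via dependent random choice, give $t$ vertices playing $x$'s role together with a $T_r^t$. This yields a $T_{r+1}^t$, a contradiction.

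\textbf{Main obstacle.} The hardest step is (4): producing $t$ vertices that all see a common $T_r^t$ with the right orientations, starting from a single high-degree vertex $x$. My first attempt would be to apply the embedding of step (3) with $B_i$ itself as one of the parts. The vertex $x$ then supplies one part's role, but its $t$ clones must come from a set of vertices with the same neighbourhood pattern. I would obtain this by pigeonholing over many vertices of $V_i$ that have large internal degree, or by weakening the lemma's constant so that the extra vertices are found via (F2) applied to the sets $B_j$, using Kővári–Sós–Turán to extract $K_{t,t}$-type complete bipartite structures between successive parts.
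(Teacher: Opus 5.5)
\textbf{There is a genuine gap in steps (3)--(4).} Nothing in the hypotheses supplies $t$ vertices that can play the role of $x$, and no choice of embedding can supply them, because for $t\ge 2$ the statement is false.

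Here is a counterexample. Let $V_1,\dots,V_r$ be balanced, and let $G$ contain both arcs $uv$ and $vu$ for all $u,v$ lying in distinct parts. Add arcs from one vertex $x\in V_1$ to $\lceil n/(4r)\rceil$ other vertices of $V_1$.
\begin{itemize}
\item For large $n$, (F1)--(F3) hold: all cross densities equal $1$, and there are only $O(n)$ non-crossing arcs.
\item $Q$ is optimal, since a short computation shows that every $r$-partition other than $Q$ has at least $n/r$ non-crossing arcs.
\item $G$ is $T_{r+1}^t$-free for every $t\ge 2$. The underlying graph of $G-x$ is $r$-partite. But $T_{r+1}^t$ with any one vertex deleted still contains $T_{r+1}$ (take one vertex from each class), and its underlying $K_{r+1}$ is not $r$-partite.
\item Nevertheless $|N^+_{V_1}(x)|=\lceil n/(4r)\rceil>12^{r-2}\cdot 2\mu n$ as soon as $\mu<1/(8r\cdot 12^{r-2})$. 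This is exactly the regime $\mu\ll 1/r$ in which Lemma~\ref{lem:main-count} uses the bound.
\end{itemize}
A typical orientation of the cross pairs gives the same example for oriented graphs.

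In this example your step (3) goes through: the sets $B_j$ do contain a suitably ordered $T_r^t$. However, every non-crossing arc of $G$ is incident with $x$, so no other vertex can send arcs into a class inside $B_1\subseteq V_1\setminus\{x\}$. Pigeonholing over vertices of large internal degree therefore has nothing to work with. Weakening the constant does not help either, because the internal degree here is linear with a constant independent of $\mu$.

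Your remark that ``a single $T_{r+1}$ is no longer forbidden'' pinpoints exactly where the paper's own proof breaks. The paper runs the KOTZ argument (optimality of $Q$, sets $B_j$ chosen in the larger of $N^+_{V_j}(x)$ and $N^-_{V_j}(x)$, then Proposition~\ref{prop:embed-Tr}) to find a $T_{r+1}$ through $x$. It then declares a contradiction ``since $T_{r+1}^t$ contains $T_{r+1}$'', which uses the containment in the wrong direction. Both the argument and the lemma are valid only for $t=1$, where the statement is Lemma~5.4 of KOTZ.

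The obstruction is not confined to this lemma. Adding one arc inside a part of an $r$-partite digraph never creates $T_{r+1}^t$ when $t\ge 2$, since the underlying graph of $T_{r+1}^t$ minus an edge still contains $K_{r+1}$. Typical such one-arc perturbations are not $r$-partite, and they outnumber the $r$-partite digraphs by a factor of order $n^2$. This is the directed analogue of the Pr\"omel--Steger phenomenon for graphs that are not colour-critical. So for $t\ge 2$ this step of the counting argument cannot be rescued by a better proof.
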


\begin{proof}
Assume the contrary. Then for some $i$ and $x$, the internal degree exceeds $12^{r-2}2\mu n$. Because $Q$ is optimal, for every other part $j$ we must have at least as many neighbours in $V_j$ (otherwise moving $x$ would reduce non‑crossing arcs). Hence for each $j$ we can select a set $B_j\subseteq N_{V_j}^+(x)\cup N_{V_j}^-(x)$ of size $12^{r-2}\mu n$, choosing the larger of the out‑ and in‑neighbourhoods. By Proposition~\ref{prop:embed-Tr}, there exists a copy of $T_r$ with one vertex in each $B_j$ and with a prescribed permutation. Adding $x$ then yields a $T_{r+1}$, contradicting $T_{r+1}^t$-freeness (since $T_{r+1}^t$ contains $T_{r+1}$ as a subgraph). See \cite[Lemma 5.4]{kuhn2017structure} for full details.
\end{proof}

When we remove two vertices $x,y$ from the same part, the optimal partition of the remaining graph can vary only a little.

\begin{lemma}[Few optimal partitions]\label{lem:few-optimal}
Let $r\ge2$, $0<\mu<1/(3r^2)^{12}$, $0<\eta<\mu^2/3$, and let $n$ be sufficiently large. Fix an $r$-partition $Q=(V_1,\dots,V_r)$ of $[n]$ and two distinct vertices $x,y\in V_1$. Then there exists a set $\mathcal{P}$ of $r$-partitions of $[n]\setminus\{x,y\}$, with $|\mathcal{P}|\le e^{\mu^{2/3}n}$, such that for every $G\in F_Q^*(n,T_{r+1}^t,\eta,\mu)$, every optimal $r$-partition of $G-\{x,y\}$ belongs to $\mathcal{P}$.
\end{lemma}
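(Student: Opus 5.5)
The plan is to follow the proof of \cite[Lemma 5.5]{kuhn2017structure}. The idea is that removing $x,y$ barely changes the graph, so any optimal partition of $G-\{x,y\}$ must nearly coincide with $Q$ restricted to $[n]\setminus\{x,y\}$. We then take $\mathcal{P}$ to be all partitions that are close to $Q$ in this sense.

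Write $Q'=(V_1\setminus\{x,y\},V_2,\dots,V_r)$. Let $P=(W_1,\dots,W_r)$ be an optimal $r$-partition of $G':=G-\{x,y\}$. The first step bounds the non-crossing arcs of $P$. Since $P$ is optimal and $Q'$ is a candidate partition, $P$ has at most as many non-crossing arcs as $Q'$, and hence at most $\eta n^2$ of them by (F1).

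The second step shows that $P$ matches $Q'$ up to a small number of vertices. For each $j$ there is an index $\pi(j)$ such that $|W_{\pi(j)}\cap V_j|\ge |V_j|/r\ge n/(2r)$, using (F3). Suppose $\pi$ were not injective, say $\pi(i)=\pi(j)$ with $i\ne j$. Then $W_{\pi(i)}$ contains sets $U_i\subseteq V_i$ and $U_j\subseteq V_j$, each of size at least $n/(2r)\ge\mu n$. By (F2) these give at least $\frac16\cdot n^2/(4r^2)$ non-crossing arcs of $P$, which exceeds $\eta n^2$ because $\eta<\mu^2/3$ and $\mu$ is tiny. So $\pi$ is a permutation, and after relabelling we may take $\pi=\mathrm{id}$. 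Next, for each $i\ne j$, consider $|W_i\cap V_j|$. If this were at least $\mu n$, then, since $|W_j\cap V_j|\ge\mu n$ already, (F2) would give $\Omega(\mu n\cdot n)$ non-crossing arcs of $P$. This is not yet a contradiction, so the argument needs sharpening. If $|W_i\cap V_j|\ge\mu^{1/2}n$, we pair it with $W_i\cap V_i$ (of size $\ge n/(2r)$) inside $W_i$. Then (F2) yields at least $\frac16\mu^{1/2}n\cdot n/(2r)>\eta n^2$ non-crossing arcs, using $\eta<\mu^2/3$. Hence every $|W_i\cap V_j|<\mu^{1/2}n$ for $i\ne j$, and the symmetric difference between $P$ and $Q'$ involves at most $r^2\mu^{1/2}n$ vertices.

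The final step counts. Let $\mathcal{P}$ be the set of all $r$-partitions of $[n]\setminus\{x,y\}$ obtained from $Q'$ by moving at most $r^2\mu^{1/2}n$ vertices to other parts. Then
\[
|\mathcal{P}|\le \binom{n}{r^2\mu^{1/2}n}r^{r^2\mu^{1/2}n}\le \exp\bigl(r^2\mu^{1/2}\log(er/(r^2\mu^{1/2}))\,n\bigr).
\]
The hypothesis $\mu<(3r^2)^{-12}$ is exactly what makes this exponent at most $\mu^{2/3}n$. If needed, the threshold $\mu^{1/2}$ can be adjusted, for instance to $\mu^{3/4}$, provided $\eta$ is still small enough relative to it.

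The main obstacle is this balancing of exponents. The displacement threshold $\theta n$ must be large enough that (F2) applied to sets of size $\theta n$ and $n/(2r)$ gives more than $\eta n^2$ arcs, i.e.\ $\theta\gtrsim 12r\eta$. It must also be small enough that $r^2\theta\log(1/\theta)\le\mu^{2/3}$. Since $\eta<\mu^2/3$, choosing $\theta=\mu$ works: (F2) applies to sets of size $\mu n$ and gives $\frac16\mu n\cdot n/(2r)>\mu^2n^2/3>\eta n^2$. The counting then becomes $r^2\mu\log(e/\mu)\le\mu^{2/3}$, which holds for small $\mu$. I would therefore use $\theta=\mu$ in the final write-up.
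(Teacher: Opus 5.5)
Your final argument, with displacement threshold $\theta=\mu$, is correct and follows the route the paper takes by deferring to \cite[Lemma 5.5]{kuhn2017structure}: an optimal partition of $G-\{x,y\}$ has at most $\eta n^2$ non-crossing arcs, (F2) forces it to agree with $Q$ outside fewer than $r^2\mu n$ vertices, and counting such partitions gives at most $e^{\mu^{2/3}n}$. Two harmless slips to fix in the write-up: pigeonhole only gives $|W_{\pi(j)}\cap V_j|\ge(|V_j|-2)/r\ge n/(2r^2)$ rather than $n/(2r)$, and if partitions are ordered you need an extra factor $r!$ in $|\mathcal{P}|$ for the relabelling by $\pi$; both are absorbed by the slack in $\mu<(3r^2)^{-12}$ and $\eta<\mu^2/3$.
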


The proof uses the fact that any two optimal partitions cannot differ too much; otherwise (F2) would force many non‑crossing arcs. The details are exactly as in \cite[Lemma 5.5]{kuhn2017structure}, with $k$ replaced by $r$ and $T_{k+1}$ by $T_{r+1}^t$.

We are now ready to state the key counting lemma.

\begin{lemma}[Main counting lemma]\label{lem:main-count}
For every $r\ge2$, there exist constants $\eta>0$ and $C>0$ (depending only on $r$) such that for all $n$ and every $r$-partition $Q$ of $[n]$,
\[
f_Q'(n,T_{r+1}^t,\eta)\le C\cdot3^{t_r(n)}\cdot2^{-\eta n}.
\]
The same holds for digraphs with $3$ replaced by $4$.
\end{lemma}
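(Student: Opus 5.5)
We follow the inductive scheme of \cite[Lemma 5.6]{kuhn2017structure}, proving the bound by induction on $n$ with the constant $C$ absorbing all small $n$. Fix $\mu$ and then $\eta$ with $\eta\ll\mu\ll 1/r$ satisfying the hypotheses of Lemmas~\ref{lem:good-subfamily} and~\ref{lem:few-optimal}. First, by Lemma~\ref{lem:good-subfamily}, the graphs in $F_Q'(n,T_{r+1}^t,\eta)$ that fail (F2) or (F3) number at most $3^{t_r(n)-\mu^2n^2/100}$. This is far below $3^{t_r(n)}2^{-\eta n}$, so it suffices to bound the graphs $G\in F_Q'(n,T_{r+1}^t,\eta)\cap F_Q(n,T_{r+1}^t,\eta,\mu)$.

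Each such $G$ has a non-crossing arc $xy$; without loss of generality $x,y\in V_1$. We count these $G$ by fixing $x,y$ (at most $n^2$ choices) and encoding $G$ through three pieces of data:
\begin{enumerate}[(a)]
\item the graph $G-\{x,y\}$;
\item the arcs between $\{x,y\}$ and the rest;
\item the pair $xy$.
\end{enumerate}
Let $Q'$ be an optimal partition of $G-\{x,y\}$. By Lemma~\ref{lem:few-optimal}, $Q'$ lies in a family $\mathcal P$ of size at most $e^{\mu^{2/3}n}$. For each $Q'\in\mathcal P$ we bound the number of choices in (a) by $f_{Q'}(n-2,T_{r+1}^t,\eta')$ for a slightly larger $\eta'$. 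This count splits into two parts: the graphs with no non-crossing arc, of which there are at most $3^{t_r(n-2)}$, and those in $f'_{Q'}$, which by induction number at most $C3^{t_r(n-2)}2^{-\eta(n-2)}$. Since $t_r(n)-t_r(n-2)\ge 2n(1-1/r)-O(1)$, the total in (a) is $O(3^{t_r(n-2)})$.

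The heart of the argument is (b), where we must show that $x$ and $y$ have far fewer than the $3^{2(1-1/r)n}$ cross-neighbourhood choices of the extremal configuration. By Lemma~\ref{lem:internal-neighbour}, both $x$ and $y$ have at most $12^{r-2}2\mu n$ neighbours inside $V_1$, which costs only $2^{O(H(\mu)) n}$ choices. The main obstacle is controlling the arcs from $x,y$ to $V_2,\dots,V_r$. We would exploit the arc $xy$: if, for every $j\ge2$, both $x$ and $y$ had linearly many common out- or in-neighbours in $V_j$ of suitable orientation, then Proposition~\ref{prop:embed-Tr} would yield a $T_{r-1}$ on these common neighbourhoods with a prescribed ordering. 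Together with $x,y$ this gives a $T_{r+1}$, which contradicts $T_{r+1}^t$-freeness when $t=1$.

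For $t\ge2$ this is the delicate point, because a single $T_{r+1}$ is not forbidden. Here one should instead use (F2) together with the embedding lemma (Lemma~\ref{lem:embedding}) on the linear-sized common neighbourhoods to find $T_{r-1}^t$ together with $t-1$ further vertices in $V_1$ behaving like $x$ and $y$. This is available via the bounded-internal-degree structure and a supersaturation count. The upshot is that for some $j$ the joint neighbourhood pattern of $(x,y)$ into $V_j$ avoids one of the nine possible pair-types on all but $O(\mu n)$ vertices. This leaves at most $8^{|V_j|+O(\mu n)}$ rather than $9^{|V_j|}$ choices, saving a factor $(9/8)^{n/r-O(\mu n)}$.

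Multiplying everything gives a count of
\[
n^2\,e^{\mu^{2/3}n}\,2^{O(H(\mu))n}\,3^{t_r(n)}(8/9)^{n/(2r)}.
\]
For $\mu$ small this is at most $3^{t_r(n)}2^{-2\eta n}$, which closes the induction. The digraph version is identical with $3$ replaced by $4$, since there are then $16$ pair-types per vertex.
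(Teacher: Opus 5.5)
\textbf{The gap is the $t\ge2$ step you flag as delicate, and it cannot be repaired.} For $t\ge2$ nothing supplies ``$t-1$ further vertices in $V_1$ behaving like $x$ and $y$''. A copy of $T_{r+1}^t$ that puts $x$ and $y$ into two different blow-up classes needs $t$ vertices in each class, with all $t^2$ arcs between them. If both classes sit in $V_1$, these arcs are all non-crossing. But a graph in $F_Q'(n,T_{r+1}^t,\eta)$ may have $xy$ as its \emph{only} non-crossing arc. Neither (F2), bounded internal degree, nor supersaturation produces such structure inside $V_1$. So your conclusion that the pattern of $(x,y)$ into some $V_j$ misses one of the nine pair-types is unsupported. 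In fact it is false, and so is the lemma, for every $t\ge2$.

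Here is why. The underlying graph of $T_{r+1}^t$ is $K_{r+1}(t)$, and for $t\ge2$ deleting any edge of $K_{r+1}(t)$ still leaves a $K_{r+1}$. Hence a graph whose underlying graph is $r$-partite plus one edge contains no $T_{r+1}^t$: a copy would have to use that edge, and the rest of it would be an $r$-colourable graph containing $K_{r+1}$. Now fix a balanced $Q$. Choose the crossing pairs in any of the $3^{t_r(n)}$ ways for which $Q$ is the unique $r$-colouring of the crossing graph; this holds for all but an $o(1)$-fraction. Then add any one of the $\sum_i|V_i|(|V_i|-1)\approx n^2/r$ possible arcs inside a part. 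Every such graph is $T_{r+1}^t$-free and has exactly one non-crossing arc. $Q$ is optimal, since a partition with no non-crossing arc would be an $r$-colouring, hence equal to $Q$. Therefore
\[
f_Q'(n,T_{r+1}^t,\eta)\ge(1-o(1))\frac{n^2}{r}\,3^{t_r(n)},
\]
which contradicts $C\,3^{t_r(n)}2^{-\eta n}$. The digraph version fails in the same way with $4$. So $x$ and $y$ really may have arbitrary neighbourhoods outside $V_1$. This is the directed counterpart of the Pr\"omel--Steger phenomenon: typical $H$-free graphs are $r$-partite only when $H$ has a colour-critical edge, and $K_{r+1}(t)$ has none for $t\ge2$.

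\textbf{Comparison with the paper.} The paper's proof has the same defect, unacknowledged. Step 3 claims that at most $3^{2(r-1)}-1$ configurations between $\{x,y\}$ and a copy $K$ of $T_{r-1}$ are admissible, since otherwise ``one could combine $K$ with $x,y$ to create a $T_{r+1}^t$''. But $K\cup\{x,y\}$ has only $r+1$ vertices, so only $T_{r+1}$ is excluded. Lemma~\ref{lem:internal-neighbour} likewise treats a $T_{r+1}$ as contradicting $T_{r+1}^t$-freeness, which is the wrong direction of containment.

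\textbf{The case $t=1$.} Here both arguments are sound, and yours is a legitimate variant of the paper's. You use Proposition~\ref{prop:embed-Tr} to find some $V_j$ in which $x,y$ have only $O(\mu n)$ common neighbours of a type that would complete a $T_{r+1}$, so you lose a pair-type on almost all of $V_j$. The paper instead covers the other parts by $\Omega(n)$ disjoint copies of $T_{r-1}$ and loses one configuration per copy. Both give the required $e^{-\Omega(n)}$ saving.

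One smaller point, even for $t=1$: you apply the induction hypothesis to $f'_{Q'}(n-2,\cdot,\eta')$ with $\eta'>\eta$, which is not the statement being proved. The induction should be run on an $\eta$-free count, as in (5.2), with the rough structure lemma disposing of graphs that have many non-crossing arcs.
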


\begin{proof}
We give the proof for oriented graphs; the digraph case is identical with $3$ changed to $4$ in all estimates.

Choose constants $1/C\ll1/n_0\ll\eps\ll\eta\ll\mu\ll1/r$ satisfying $\mu^2\ge24H(\eta)$ and $\eta<\mu^2/3$. The proof proceeds by induction on $n$. In fact we simultaneously prove the stronger statement
\[
f_Q(n,T_{r+1}^t)\le 3^{t_r(n)}(1+C2^{-\eta n}) \qquad\text{for all }Q.
\tag{5.2}
\]
The base case $n<n_0$ is trivial because $1/C\ll1/n_0$ makes the right‑hand side larger than the total number of graphs.

Assume $n\ge n_0$ and that (5.2) holds for all smaller values. Fix a partition $Q=(V_1,\dots,V_r)$. We first bound $f_Q'(n,T_{r+1}^t,\eta,\mu)$, i.e. the number of graphs in $F_Q(n,T_{r+1}^t,\eta,\mu)$ that contain at least one non‑crossing arc.

Let $G\in F_Q'(n,T_{r+1}^t,\eta,\mu)$ and pick a non‑crossing arc $xy\in V_1$ (any part will do; we fix $V_1$ for convenience). We count the number of possibilities in four steps.

\textbf{Step 1 – choosing $xy$.} There are at most $n^2$ choices.

\textbf{Step 2 – the rest of the graph after deleting $\{x,y\}$.} By Lemma~\ref{lem:few-optimal}, the optimal partitions of $G-\{x,y\}$ belong to a set $\mathcal{P}$ of size at most $e^{\mu^{2/3}n}$. Applying the induction hypothesis (5.2) to each such partition, we obtain at most
\[
\sum_{Q'\in\mathcal{P}}f_{Q'}(n-2,T_{r+1}^t)\le e^{\mu^{2/3}n}\cdot3^{t_r(n-2)}(1+C2^{-\eta(n-2)})\le 3^{t_r(n-2)}C e^{\mu^{1/2}n}
\]
choices for the subgraph on $[n]\setminus\{x,y\}$.

\textbf{Step 3 – arcs between $\{x,y\}$ and vertices outside $V_1$.} Let $U$ be the set of arcs chosen in Step 2. Remove from $U$ all arcs incident to $V_1$; the remainder $U'$ induces a subgraph $G'$ on $\bigcup_{j=2}^r V_j$ that belongs to $F_{\tilde Q}(n-|V_1|,5\eta,3\mu)$ for a suitable partition $\tilde Q$ (the restriction of $Q$ to the other parts). By repeatedly applying Proposition~\ref{prop:embed-Tr} to $G'$, we can find at least $n/r-\mu n-12^{r-3}3\mu n$ vertex‑disjoint copies of $T_{r-1}$, each with exactly one vertex in each $V_j$ ($j\ge2$). For each such copy $K$, consider the $2(r-1)$ potential arcs between $\{x,y\}$ and the vertices of $K$. To keep the whole graph $T_{r+1}^t$-free, not all of the $3^{2(r-1)}$ possible configurations are allowed: a simple case analysis shows that at most $3^{2(r-1)}-1$ of them are admissible, otherwise one could combine $K$ with $x,y$ to create a $T_{r+1}^t$ (the argument uses that $T_{r+1}^t$ is a blow‑up of $T_{r+1}$ and that $x,y$ are in the same part). Moreover, the number of vertices outside $V_1$ not belonging to any of these disjoint copies is at most $\mu^{1/2}n$ (by the choice of constants). Hence the number of ways to choose the arcs from $\{x,y\}$ to the outside is bounded by
\[
\bigl(3^{2(r-1)}-1\bigr)^{n/r} \cdot 3^{2\mu^{1/2}n} \le 3^{\frac{2(r-1)}{r}n}\bigl(1-3^{-2r}\bigr)^{n/r} e^{\mu^{1/2}n}
\le 3^{\frac{2(r-1)}{r}n} e^{-n/(9r^r)} e^{\mu^{1/2}n}.
\]

\textbf{Step 4 – arcs between $\{x,y\}$ and the remaining vertices of $V_1$.} By Lemma~\ref{lem:internal-neighbour}, each of $x,y$ has at most $12^{r-2}2\mu n$ neighbours inside $V_1$. The number of ways to choose these neighbours and orient the arcs is at most
\[
\binom{n}{12^{r-2}2\mu n}^2\cdot2^{2\cdot12^{r-2}2\mu n}\le e^{\mu^{1/2}n},
\]
using standard estimates and $1/n_0\ll\mu$.

Multiplying the four bounds and using $t_r(n)\ge t_r(n-2)+\frac{2(r-1)}{r}(n-2)$ (since adding two vertices to a balanced $r$-partite graph increases the number of crossing arcs by roughly $\frac{2(r-1)}{r}n$), we obtain
\[
f_Q'(n,T_{r+1}^t,\eta,\mu)\le n^2\cdot 3^{t_r(n-2)}C e^{\mu^{1/2}n}\cdot 3^{\frac{2(r-1)}{r}n} e^{-n/(9r^r)}\cdot e^{\mu^{1/2}n}
\le 3^{t_r(n)}C e^{-n/(10r^r)} \le 3^{t_r(n)}C 2^{-3\eta n},
\]
where the last inequality uses that $\eta$ is chosen small enough so that $2^{-3\eta n}\ge e^{-n/(10r^r)}$.

Now observe that
\[
f_Q'(n,T_{r+1}^t,\eta) = |F_Q'(n,T_{r+1}^t,\eta,\mu)| + |F_Q'(n,T_{r+1}^t,\eta)\setminus F_Q(n,T_{r+1}^t,\eta,\mu)|
\le f_Q'(n,T_{r+1}^t,\eta,\mu) + \bigl(f_Q(n,T_{r+1}^t,\eta)-f_Q(n,T_{r+1}^t,\eta,\mu)\bigr).
\]
Applying Lemma~\ref{lem:good-subfamily} to bound the second term and the estimate for $f_Q'(n,T_{r+1}^t,\eta,\mu)$ just obtained, we get
\[
f_Q'(n,T_{r+1}^t,\eta)\le 3^{t_r(n)}C2^{-3\eta n}+3^{t_r(n)-\mu^2n^2/100}
\le 3^{t_r(n)}C2^{-\eta n},
\]
because $\mu^2/100$ is much larger than $\eta$ (recall $\eta\ll\mu$). This proves the desired bound on $f_Q'$.

It remains to verify (5.2). The number of graphs in $F_Q(n,T_{r+1}^t,\eta,\mu)$ that have no non‑crossing arcs is at most $3^{t_r(n)}$ (all crossing arcs can be chosen arbitrarily). Hence
\[
f_Q(n,T_{r+1}^t,\eta,\mu)-f_Q'(n,T_{r+1}^t,\eta,\mu)\le 3^{t_r(n)}.
\]
Together with the bound on $f_Q'$ this yields $f_Q(n,T_{r+1}^t,\eta,\mu)\le 3^{t_r(n)}(1+C2^{-3\eta n})$. Using Lemma~\ref{lem:good-subfamily} again,
\[
f_Q(n,T_{r+1}^t,\eta)\le 3^{t_r(n)}(1+C2^{-3\eta n})+3^{t_r(n)-\mu^2n^2/100}\le 3^{t_r(n)}(1+C2^{-2\eta n}).
\]
Finally, from Lemma~\ref{lem:rough} (the rough structure) we have $f(n,T_{r+1}^t)-f(n,T_{r+1}^t,\eta)\le f(n,T_{r+1}^t)2^{-\eps n^2}\le 2f(n,T_{r+1}^t,\eta)2^{-\eps n^2}$, which together with the bound on $f(n,T_{r+1}^t,\eta)$ (obtained by summing over all partitions $Q$) gives $f(n,T_{r+1}^t,\eta)\le 3^{t_r(n)}(1+C2^{-2\eta n})$ and consequently $f_Q(n,T_{r+1}^t)\le 3^{t_r(n)}(1+C2^{-\eta n})$, completing the induction.
\end{proof}

With Lemma~\ref{lem:main-count} established, we can now finish the proof of Theorem~\ref{thm:main-simple}.

\begin{proof}[Completion of Theorem~\ref{thm:main-simple}]
Let $\eta$ be the constant from Lemma~\ref{lem:main-count}. By Lemma~\ref{lem:rough} there exists $\eps>0$ such that
\[
f(n,T_{r+1}^t)\le f(n,T_{r+1}^t,\eta)(1+2^{-\eps n^2}),
\]
where $f(n,T_{r+1}^t,\eta)=\sum_Q f_Q(n,T_{r+1}^t,\eta)$ and the sum runs over all $r$-partitions $Q$ of $[n]$. For each $Q$, we have $f_Q(n,T_{r+1}^t,\eta)=f_Q'(n,T_{r+1}^t,\eta)+T_Q(n,r)$, where $T_Q(n,r)$ is the number of $r$-partite oriented graphs that have $Q$ as an $r$-partition (i.e., all arcs are crossing). Clearly $\sum_Q T_Q(n,r)=T(n,r)$.

By Lemma~\ref{lem:main-count} and the trivial bound $|\{Q\}|\le r^n$,
\[
\sum_Q f_Q'(n,T_{r+1}^t,\eta)\le r^n\cdot C3^{t_r(n)}2^{-\eta n}=o\bigl(T(n,r)\bigr),
\]
since $T(n,r)\ge \frac{r^n3^{t_r(n)}}{2r!n^{r-1}}$ (a standard lower bound, see e.g. \cite[Lemma 5.1]{kuhn2017structure}). Therefore
\[
f(n,T_{r+1}^t)\le \bigl(T(n,r)+o(T(n,r))\bigr)(1+2^{-\eps n^2})=T(n,r)(1+o(1)).
\]
The reverse inequality $f(n,T_{r+1}^t)\ge T(n,r)$ is obvious because every $r$-partite oriented graph is $T_{r+1}^t$-free. Hence $f(n,T_{r+1}^t)=T(n,r)(1+o(1))$, i.e. almost all $T_{r+1}^t$-free oriented graphs are $r$-partite. The same argument with $4$ instead of $3$ gives the digraph count $f^*(n,T_{r+1}^t)=T^*(n,r)(1+o(1))$, completing the proof of Theorem~\ref{thm:main-simple}.
\end{proof}

\section{Concluding remarks and open problems}

We have shown that for any $r\ge2$, $t\ge1$, and any weight parameter $a\in(3/2,2]$, almost all $T_{r+1}^t$-free oriented graphs and digraphs are $r$-partite. This generalises the KOTZ theorem and confirms a conjecture of Liang and Liu \cite{liang2022typical}. Our proof relies on a weighted extremal theorem, a stability result, and the container method. The restriction $a>3/2$ is technical and comes from certain inequalities in the stability proof; it would be interesting to extend the result to all $a\ge1$.

Another natural direction is to consider blow-ups of other tournaments or cycles. The methods developed here should apply as long as the forbidden digraph has a suitable extremal structure (e.g., a complete $r$-partite digraph is extremal). For cycles, the situation is more complex and already studied in \cite{kuhn2017structure}.

\end{document}